\renewcommand{\S}{\mathcal{S}}
\newcommand{\be}{\begin{equation}}
\newcommand{\ee}{\end{equation}}
\newcommand{\ben}{\begin{enumerate}}
\newcommand{\een}{\end{enumerate}}
\renewcommand{\to}{\rightarrow}
\newcommand{\ol}{\overline}
\DeclareMathOperator{\Ad}{Ad}
\newtheorem{thm}{Theorem}
\newtheorem{cor}[thm]{Corrolary}
\newtheorem{prop}[thm]{Proposition}
\theoremstyle{definition}
\newtheorem{defn}{Definition}
\numberwithin{example}{section}
\theoremstyle{remark}
\newcommand{\co}{:}
\newcommand{\act}{\mathbin{\hbox{$<\kern-.4em\mapstochar\kern.4em$}}}
\newcommand{\ract}{\mathbin{\hbox{$\mapstochar\kern-.3em>$}}}
\newcommand{\id}{\text{\upshape id}}
\begin{document}
\title{Symmetries of the space of connections on a principal $G$-bundle and related symplectic structures}

\author{Grzegorz Jakimowicz, Anatol Odzijewicz, Aneta Sli\.{z}ewska}

\maketitle
\tableofcontents
\begin{abstract} We investigate $G$-invariant symplectic structures on the cotangent bundle $T^*P$ of a principal $G$-bundle $P(M,G)$which are canonically related to  automorphisms of the tangent bundle $TP$ covering the identity map of $P$ and commuting with the action of $TG$ on $TP$. The  symplectic structures corresponding to connections on $P(M,G)$ are also investigated. The Marsden-Weinstein reduction procedure  for these symplectic structures is discussed.
\end{abstract}
\section{Introduction}

The phase space of a typical Hamiltonian system is the cotangent bundle $T^*P$ of its configurations space $P$ equiped with the standard symplectic form. Usually one considers  the case when Hamiltonian of this system is invariant with respect to the cotangent lift of an action of some group $G$ on $P$. Therefore, it is sometimes reasonable to replace the standard symplectic form with another  $G$-invariant symplectic form on $T^*P$  which retains certain properties. Assuming that the action of $G$ on $P$  is free and the quotient space $P/G$ is a manifold $M$ one can consider $P$ as the total space of the principal $G$-bundle $P(M,G)$.  Motivated  by the above we  consider here the following structures related to $P(M,G)$ in a natural way. 
\begin{enumerate}

\item[(i)] The space $CanT^*P$ of fibre-wise linear non-singular differential one-forms $\gamma$ on the cotangent bundle $T^*P$, which annihilate the vectors tangent to the fibres of $T^*P$. 

	\item[(ii)]The space $Conn P(M,G)$ of connections on the principal $G$-bundle $P(M,G)$.

 \item[(iii)] The group $Aut_{TG}TP$ of automorphisms  of the tangent bundle $TP$ covering the identity map of $P$ which commute with the action of the tangent group $TG$ on $TP$. 
\end{enumerate}

Since the canonical one-form $\gamma_0$ belongs to $CanT^*P$ and other elements of $CanT^*P$ posses some properties of $\gamma_0$, hence they are called generalized canonical forms  here.

There are  important  relations between  the above structures. Namely the group $Aut_{TG}TP$ acts on the both subspaces mentioned above. The action on $ConnP(M,G)$ is transitive and the action on $CanT^*P$ is free. The orbit $Can_{TG}T^*P$ of $Aut_{TG}TP$ through the canonical form $\gamma_0$ consists of $G$-invariant  generalized canonical forms $\gamma$ such that the momentum maps corresponding to symplectic  forms $\omega=d\gamma$ coincide with the  momentum map which corresponds to the standard symplectic form  $\omega_0=d\gamma_0$. In this way one obtains a family of symplectic forms $\omega_A$ on $T^*P$ enumerated by the elements $A$ of the group $Aut_{TG}TP$.

The group  $Aut_{TG}TP$ is investigated in Section \ref{sec2}, see Proposition \ref{prop:2}.

In Section \ref{sec3} we describe the action of the group $Aut_{TG}TP$ on $Conn P(M,G)$ and show that  $Aut_{TG}TP$ can be defined equivalently  as the group of symmetries of $Conn P(M,G)$, see Proposition \ref{prop:4}.

 The relations between the group  $Aut_{TG}TP$ and the space $Can_{TG}T^*P$ are investigated in Section  \ref{sec4}, see Proposition \ref{prop:5} and Proposition \ref{prop6}. Among other things it is shown that fixing a reference connection $\alpha$ one embeds the space of all connections into the space $Can_{TG}T^*P$ of generalized canonical  forms, see Corrolary \ref{cor:7}. The choice of the reference connection $\alpha$ also allows ones to define  a $G$-equivariant diffeomorphism $I_\alpha:T^*P\to \ol P\times T^*_eG$, see \cite{Mont,Weinstein:1977},   where $\ol P$ is the total space of the pull back $\ol P(T^*M,G)$ of $P(M,G)$ on $T^*M$ and $T^*_eG$ is the dual of the Lie algebra $T_eG$. The generalized canonical forms and related symplectic forms $\omega_A$ written on $\ol P\times T^*_eG$ obtain the form consistent with the structure of the bundle $\ol P\times T^*_eG\to T^*M$, see (\ref{86}) and (\ref{sympl}).

In Section  \ref{sec5} we discuss the $G$-Hamiltonian system on $(T^*P,\omega_A,J_0)$ which could be considered as a natural generalization of the ones investigated in \cite{Mont, Sternberg,Weinstein:1977}. The Marsden-Weinstein reduction procedure is applied to these systems.

\section{Symmetries of the tangent bundle  of a principal bundle }\label{sec2}

Let $P(M,G)$ be a principal bundle over a manifold $M$. Throughout  the paper we will denote the right action of the Lie group $G$ on $P$ by $\kappa:P\times G\to P$  and write $\mu\co P\to M$ for the bundle projection. We will use also the shorter notation $pg:=\kappa(p,g)$.
For a fixed $p\in P$ and $g\in G$ one has the corresponding maps $\kappa_p:G\to P$ and $\kappa_g:P\to P$ defined by
$$\label{kapp}\kappa_p:=\kappa(p,\cdot)\qquad {\rm and} \qquad \kappa_g:=\kappa(\cdot,g).$$
Recall that the tangent bundle $TG$ of $G$ is a Lie group itself with the  product and  the inverse defined as follows
$$\label{tangentG}
X_g \bullet Y_h:= TL_g(h)Y_h+TR_h(g)X_g,\qquad X_g^{-1}:=-TL_{g^{-1}}(e)\circ TR_{g^{-1}}(g)X_g,
$$
where $X_g\in T_g G$, $Y_h\in T_h G$ and $L_g(h):=gh$, $R_g(h):=hg$. Let $e\in G$ be the unit element of $G$ and $\mathbf{0}:G\to TG$ be the zero section of the tangent bundle $TG$. Then one has 
$$\label{tangentG1}
X_e\bullet Y_e=X_e+Y_e, \qquad \mathbf{0}_g\bullet \mathbf{0}_h= \mathbf{0}_{gh},$$
$$ X_g\bullet Y_e\bullet X_g^{-1}=(TR_{g^{-1}}(g)\circ TL_{g}(e))Y_e=:Ad_g Y_e
$$
So, the corresponding Lie algebra $T_eG$ can be considered as an abelian normal subgroup of $TG$ and the zero section $\mathbf{0}:G\to TG$ is a group monomorphism.

The diffeomorphism 
\be\label{IG} I:G\times T_eG\ni(g,X_e)\mapsto TR_g(e)X_e=:X_g\in TG\ee
allows us to consider $TG$ as the semidirect product $G\ltimes_{Ad_G}T_eG$ of $G$ by the $T_eG$, where the group product  of $(g,X_e), (h,Y_e)\in G\ltimes_{Ad_G}T_eG$ is given by
$$\label{TGprod}(g,X_e)\bullet (h,Y_e)=I^{-1}(I(g,X_e)\cdot I(h,Y_e) )=$$
$$=(gh,X_e+T(R_{g^{-1}}\circ L_g)(e)Y_e)=(gh,X_e+Ad_gY_e).$$

Using the Lie group isomorphism (\ref{IG}) and the equality 
$$\label{w1}\kappa_g\circ\kappa_p=\kappa_p\circ R_g,$$
we obtain the action 
\be\label{actionTG2} \Phi_{(g,X_e)}(v_p)=T\kappa_g(p)(v_p+T\kappa_p(e)X_e)\ee
of $G\ltimes_{Ad_G}T_eG$ on the tangent bundle $TP$.

Applying  the above action one obtains the following  isomorphisms 
\begin{gather}
\label{v1} TP/T^vP\cong TP/T_eG ,\\
\label{v2} TP/TG\cong (TP/T_eG)/G\cong  (TP/G)/T_eG,\\
\label{v3} TM=T(P/G)\cong TP/TG,
\end{gather}
of vector bundles,
where we write $T^vP := Ker T\mu$ for the vertical subbundle of  $TP$. These isomorphisms will be useful in subsequent considerations.

\bigskip

Another group important here which acts on $TP$ is the group $Aut_0TP$ of smooth automorphisms 
  $A:TP\to TP$ of the tangent bundle covering the identity map of $P$, i.e.  for any $p\in P$ one has  the map  $A(p):T_pP\to T_pP$ which  is an isomorphism of the tangent space  $T_pP$ and $A(p)$ depends smoothly  on $p$. Note here that $Aut_0TP$  is a normal subgroup of the group $AutTP$ of all automorphisms of $TP$. 
	
	By $Aut_{TG}TP\subset Aut_0TP$ we denote the subgroup consisting of those elements of $Aut_0TP$ whose action on $TP$ commutes with the action (\ref{actionTG2}) of $TG\cong G\ltimes _{Ad_g}T_eG$ on $TP$, i.e.
\be \label{equiv} A(pg)\circ \Phi_{(g,X_e)}=\Phi_{(g,X_e)}\circ A(p).\ee
From the isomorphisms (\ref{v2}) and (\ref{v3}) it follows that the group $Aut_{TG}TP$ acts also on vector bundles $TP/G\to M$ and  $TM\to M$.

\begin{prop}\label{prop:1}

 $A\in  Aut_{TG}TP$ if and only if 
\be\label{Ap1} A(p)\circ T\kappa_p(e)=T\kappa_p(e)\ee
\be\label{Ap2} A(pg)\circ T\kappa_g(p)=T\kappa_g(p)\circ A(p)\ee
for any $g\in G$ and $p\in P$.
\end{prop}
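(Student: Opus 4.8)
The plan is to read the single equivariance condition (\ref{equiv}), which is indexed by the whole group $G\ltimes_{Ad_G}T_eG$, by testing it on the two distinguished families of elements that generate this group: the fibre translations $(e,X_e)$ and the right shifts $(g,0)$. Writing out (\ref{equiv}) on a vector $v_p\in T_pP$ with the help of (\ref{actionTG2}), both sides are maps $T_pP\to T_{pg}P$ and the identity to be studied is
\[
A(pg)\bigl(T\kappa_g(p)(v_p+T\kappa_p(e)X_e)\bigr)=T\kappa_g(p)\bigl(A(p)v_p+T\kappa_p(e)X_e\bigr).
\]
I expect both implications to fall out of this one identity.

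For the ``only if'' direction I would first put $g=e$. Since $\kappa_e=\id_P$ we have $T\kappa_e(p)=\id$ and $pg=p$, so the displayed identity reduces, after cancelling $A(p)v_p$ and $T\kappa_p(e)X_e$ and using linearity of $A(p)$, to $A(p)\circ T\kappa_p(e)=T\kappa_p(e)$, which is (\ref{Ap1}). Putting instead $X_e=0$ gives $\Phi_{(g,0)}(v_p)=T\kappa_g(p)v_p$, and the identity becomes $A(pg)\circ T\kappa_g(p)=T\kappa_g(p)\circ A(p)$, which is (\ref{Ap2}).

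For the ``if'' direction, assume (\ref{Ap1}) and (\ref{Ap2}). Expanding the left-hand side of the displayed identity by linearity of $T\kappa_g(p)$ and of $A(pg)$, I would split it into $A(pg)T\kappa_g(p)v_p$, which equals $T\kappa_g(p)A(p)v_p$ by (\ref{Ap2}), and the correction term $A(pg)T\kappa_g(p)T\kappa_p(e)X_e$. Recombining the two pieces then yields exactly the right-hand side, so the whole point is to show that $A(pg)$ fixes $T\kappa_g(p)T\kappa_p(e)X_e$.

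This last step is the main obstacle. The clean argument is that $T\kappa_p(e)X_e$ lies in the vertical subspace $T^v_pP=\im T\kappa_p(e)$, and since $\kappa_g$ preserves the fibres of $\mu$ its differential maps $T^v_pP$ into $T^v_{pg}P=\im T\kappa_{pg}(e)$; then (\ref{Ap1}) evaluated at the point $pg$ says precisely that $A(pg)$ is the identity on $\im T\kappa_{pg}(e)$, hence fixes our vector. Equivalently, one can compute explicitly from (\ref{w1}), i.e. $\kappa_g\circ\kappa_p=\kappa_p\circ R_g$, together with $\kappa_{pg}=\kappa_p\circ L_g$, that $T\kappa_g(p)\circ T\kappa_p(e)=T\kappa_{pg}(e)\circ Ad_{g^{-1}}$, which again exhibits the vector as $T\kappa_{pg}(e)(Ad_{g^{-1}}X_e)\in\im T\kappa_{pg}(e)$. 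Either way the $Ad$-twist coming from the semidirect product structure is harmless, because (\ref{Ap1}) controls the entire vertical subspace at once.
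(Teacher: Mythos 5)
Your argument is correct, and its engine is the same as the paper's: substitute the explicit action (\ref{actionTG2}) into the equivariance condition (\ref{equiv}) and specialize parameters, with the twist identity $T\kappa_g(p)\circ T\kappa_p(e)=T\kappa_{pg}(e)\circ Ad_{g^{-1}}$ doing the real work on the vertical term. Two differences from the paper's proof are worth recording. First, to extract (\ref{Ap1}) the paper sets $v_p=0$ in the expanded identity and therefore needs the $Ad_{g^{-1}}$ relation already in the forward direction, concluding via invertibility of $Ad_{g^{-1}}$ that $A(pg)\circ T\kappa_{pg}(e)=T\kappa_{pg}(e)$ at every point; your specialization $g=e$ is more economical, since it produces (\ref{Ap1}) by pure cancellation and defers the twist computation to the one place it is genuinely needed. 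Second, the paper only proves the forward implication explicitly (its closing ``apply the same arguments'' refers to the derivation of (\ref{Ap2}) from the case $X_e=0$, not to the converse), whereas you spell out the converse and correctly isolate its single nontrivial point: that $A(pg)$ fixes $T\kappa_g(p)T\kappa_p(e)X_e$. Both of your justifications for this are valid --- the structural one, that $T\kappa_g(p)$ maps $T^v_pP$ into $T^v_{pg}P=\im T\kappa_{pg}(e)$ where (\ref{Ap1}) forces $A(pg)$ to act as the identity, and the computational one via (\ref{w1}) and $\kappa_{pg}=\kappa_p\circ L_g$, which reproduces exactly the paper's twist identity. Note only that the identification $T^v_{pg}P=\im T\kappa_{pg}(e)$ uses freeness of the $G$-action, which is guaranteed here since $P(M,G)$ is a principal bundle. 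On balance your write-up is, if anything, more complete than the paper's, since it closes the ``if'' direction that the paper leaves implicit.
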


 \begin{proof} Substituting (\ref{actionTG2}) into (\ref{equiv}) we obtain the equality
\be\label{Apg} A(pg)[T\kappa_g(p)v_p+(T\kappa_g(p)\circ T\kappa_p(e))X_e] = T\kappa_g(p)A(p)v_p+(T\kappa_g(p)\circ T\kappa_p(e))X_e,\ee
which is valid for all $v_p\in T_pP$ and $X_e\in T_eG$.  From the relation 
$$ \kappa_g\circ \kappa_p=\kappa_{pg}\circ I_g^{-1},$$
where $I_g:=L_g\circ R_g^{-1}$ and $Ad_g=TI_g$, setting $v_p=0$ in (\ref{Apg})   we obtain 
 the equality
$$ (A(pg)\circ T\kappa_{pg}(e)\circ Ad_{g^{-1}}(e))X_e=(T\kappa_{pg}(e)\circ Ad_{g^{-1}}(e))X_e,$$
valid for any $X_e\in T_eG$. The above gives (\ref{Ap1}). In order to show (\ref{Ap2}) we substitute $X_e=0$ into (\ref{Apg}) and apply the same arguments as for the previous case.
\end{proof}

Now we define the subgroup $Aut_N TP\subset Aut_{TG}TP$ consisting of $A\in Aut_{TG}TP$ such that $A(p)=\id_p+B(p)$, where $B(p):T_pP\to T^v_pP$. Conditions (\ref{Ap1}) and (\ref{Ap2}) imposed on $A(p)$ written in terms of $B(p)$ take the form 

$$\begin{array}{l}\label{B1} B(p)\circ T\kappa_p(e)=0\\
 B(pg)\circ T\kappa_g(p)=T\kappa_g(p)\circ B(p).\end{array}$$
From the definition  of $B(p)$ and (\ref{B1}) one has 
\be\label{Im}ImB(p)\subset T^v_pP\subset Ker B(p).\ee Thus it follows that $B_1(p)B_2(p)=0$ for any $\id+B_1,\ \id+B_2\in Aut _NP$. So, one has 
$$ A_1(p)\circ A_2(p)=(\id_p+B_1(p))(\id_p+B_2(p))=\id_p+B_1(p)+B_2(p),$$
for $A_1(p),A_2(p)\in Aut_N TP$. This shows that $Aut_N TP$ is a commutative subgroup of $Aut_{TG}TP$. Therefore, we may identify  $Aut_N TP$ with the vector subspace $End_N TP$  of $EndTP$ which consists of such endomorphisms  $B(p):T_pP\to T_pP$ that the property (\ref{Im}) is valid for any $p\in P$.

\bigskip

Now, let us recall that by the definition a connection form on $P$ is a $T_eG$-valued differential one-form $\alpha$ satisfying the conditions
\be\label{alpha1} \alpha_p\circ T\kappa_p(e)=\id_{T_eG},\ee
\be\label{alpha2} \alpha_{pg}\circ T\kappa_g(p)=\Ad_{g^{-1}}\circ \alpha_p\ee
for the value $\alpha_p$ of $\alpha$ at $p\in P$ and $g\in G$. Using $\alpha$ one defines the decomposition 
\be\label{decompTp} T_pP=T^v_pP\oplus T_p^{\alpha,h}P\ee
of $T_pP$ on the vertical $T^v_pP$ and the horizontal $T^{\alpha,h}_pP:=Ker \alpha_p$ subspaces. 
 Using  the decomposition (\ref{decompTp})  one defines the vector spaces isomorphism 
\be\label{isom}\Gamma_\alpha(p):T_{\mu(p)}M\stackrel{\sim}{\to} T^{\alpha,h}_pP\ee
 such that 
$$\label{Gamma} \Gamma_\alpha(pg)=T\kappa_g(p)\circ\Gamma_\alpha(p)\qquad{\rm and}\qquad \id _{T_{\mu(p)}M}=T\mu(p)\circ \Gamma_\alpha(p).$$
Let us take the decomposition
\be\label{iddecomp} \id_{T_pP}=\Pi^v_\alpha(p)+\Pi^h_\alpha(p)\ee
of the identity map of $T_pP$ into the sum of projections corresponding to (\ref{decompTp}). Then we have 
\be \label{projv}\Pi_\alpha^h(p)=\Gamma_\alpha(p)\circ T\mu(p)\quad{\rm and}\quad \Pi_p^v(p)=T\kappa_p(e)\circ \alpha_p.\ee

\begin{prop}\label{prop:2}\ben
\item[(i)]
One has the  short exact sequence 
\be\label{ses} \{\id_{TP}\}\to Aut_N TP\stackrel{\iota}{\to} Aut_{TG}TP\stackrel{\lambda}{\to} Aut_0TM\to \{\id_{TM}\}\ee
of the group morphisms, where $\iota$ is the inclusion map and ${\lambda}$ is an epimorphism of $Aut_{TG}TP$ on the group $Aut_0TM$ of the automorphisms of the tangent space $TM$ covering the identity map of $M$ defined by
\be\label{piA} ({\lambda}(A)(\mu(p))(T\mu(p))v_p):=(T\mu(p)\circ A(p))v_p,\ee
where $v_p\in T_pP$.
\item[(ii)] Fixing a connection $\alpha$ one defines the injection $\sigma_\alpha:Aut_0TM\to Aut_{TG}TP$ by 
\be \label {tildaA}\sigma_\alpha(\tilde A)(p):=\Pi^v_\alpha(p)+\Gamma_\alpha(p)\circ \tilde A(\mu(p))\circ T\mu(p),\ee
where $\tilde A\in Aut_0 TM$, and the surjection $\beta_\alpha:Aut_{TG}TP\to Aut_NTP$ by
\be\label{betaalpha} \beta_\alpha(A):=A\sigma_\alpha(\lambda(A))^{-1},\ee
where $A\in Aut_{TG}TP$,
which are arranged into the short exact sequence 
\be\label{sesinv} \{\id_{TM}\}\to Aut_0 TM\stackrel{\sigma_\alpha}{\longrightarrow} Aut_{TG}TP\stackrel{\beta_\alpha}{\longrightarrow} Aut_NTP\to\{\id_{TP}\},\ee
inverse to the sequence (\ref{ses}), i.e. $ Im\sigma_\alpha=\beta_\alpha^{-1}(\id_{TP})$, $\sigma_\alpha$ is a right inverse $\pi\circ\sigma_\alpha=\id_{TM}$ of $\pi$ and $\beta_\alpha$ is the left inverse $\beta_\alpha\circ\iota=\id_{TP}$ of $\iota$. The map $\sigma_\alpha$  is a monomorphism
$$ \sigma_\alpha(\tilde A_1\tilde A_2)=\sigma_\alpha(\tilde A_1)\sigma_\alpha(\tilde A_2)$$
of the groups and $\beta_\alpha$ satisfies 
$$ \beta_\alpha(A_1 A_2)=\beta_\alpha(A_1)\sigma_\alpha(\lambda(A_1))\beta_\alpha(A_2)\sigma_\alpha(\lambda(A_1))^{-1}.$$

\item[(iii)] The decomposition 
\be\label{Apdecomp} A(p)=(\id_p+B(p))\sigma_\alpha(\tilde A)(p)\ee
of $A\in Aut_{TG}TP$, where $\id_p+B(p)\in Aut_NTP$ and $\tilde A\in Aut_0 TM$, defines an isomorphism of $Aut_{TG}TP$ with the semidirect product group $Aut_0TM\ltimes_\alpha End_N TP$, where the product of $(\tilde A_1,B_1), (\tilde A_2,B_2)\in Aut_0TM\ltimes_\alpha End_N TP$ is given by 
\be\label{prod} [(\tilde A_1,B_1)\cdot (\tilde A_2,B_2)](p):=(\tilde A_1(\mu(p))\tilde A_2(\mu(p)), B_1(p)+B_2(p)\circ \Gamma_\alpha(p)\circ \tilde A_1^{-1}(\mu(p))\circ T\mu(p)).\ee
\een
\end{prop}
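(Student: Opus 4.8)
The plan is to derive all three parts from Proposition~\ref{prop:1} together with the connection identities (\ref{alpha1}), (\ref{alpha2}) and (\ref{projv}), treating the parts in the stated order. For (i) I would first check that (\ref{piA}) defines a genuine endomorphism $\lambda(A)(\mu(p))$ of $T_{\mu(p)}M$, which needs two independences. Independence of the representative $v_p$ uses (\ref{Ap1}): since $T^v_pP=\im T\kappa_p(e)$, relation (\ref{Ap1}) shows $A(p)$ is the identity on $T^v_pP$, so if $T\mu(p)v_p=T\mu(p)w_p$ then $v_p-w_p\in T^v_pP$ is fixed by $A(p)$ and $T\mu(p)A(p)(v_p-w_p)=0$. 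Independence of the point $p$ in a fibre uses (\ref{Ap2}) together with $T\mu(pg)\circ T\kappa_g(p)=T\mu(p)$, which follows from $\mu\circ\kappa_g=\mu$. That $\lambda$ is a group morphism is then immediate from (\ref{piA}), and $\ker\lambda=\im\iota$ holds because $\lambda(A)=\id_{TM}$ says precisely $(A(p)-\id_p)v_p\in T^v_pP$ for all $v_p$, i.e. $A=\id+B$ with $\im B(p)\subset T^v_pP$. Injectivity of $\iota$ is trivial, while surjectivity of $\lambda$ I would postpone, since the section $\sigma_\alpha$ of part (ii) provides a right inverse.

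For (ii) the first task is to verify $\sigma_\alpha(\tilde A)\in Aut_{TG}TP$ by checking (\ref{Ap1}) and (\ref{Ap2}) for (\ref{tildaA}). Condition (\ref{Ap1}) follows from $\Pi^v_\alpha(p)\circ T\kappa_p(e)=T\kappa_p(e)$ (use (\ref{projv}) and (\ref{alpha1})) and from $T\mu(p)\circ T\kappa_p(e)=0$; condition (\ref{Ap2}) follows by treating the two summands of (\ref{tildaA}) separately, using the equivariance of $\Pi^v_\alpha$ (a consequence of (\ref{alpha2})), the equivariance $\Gamma_\alpha(pg)=T\kappa_g(p)\circ\Gamma_\alpha(p)$, the relation $T\mu(pg)\circ T\kappa_g(p)=T\mu(p)$, and $\mu(pg)=\mu(p)$. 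A short computation with $T\mu(p)\circ\Gamma_\alpha(p)=\id$ and $T\mu(p)\circ\Pi^v_\alpha(p)=0$ gives $\lambda(\sigma_\alpha(\tilde A))=\tilde A$, proving both the surjectivity of $\lambda$ left open in (i) and the injectivity of $\sigma_\alpha$. Multiplicativity of $\sigma_\alpha$ reduces to expanding the product of two copies of (\ref{tildaA}); three of the four cross terms vanish by $\Pi^v_\alpha\circ\Gamma_\alpha=0$, $T\mu\circ\Pi^v_\alpha=0$ and $\Pi^v_\alpha\circ\Pi^v_\alpha=\Pi^v_\alpha$, and the remaining one collapses through $T\mu\circ\Gamma_\alpha=\id$. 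Then $\beta_\alpha(A)=A\,\sigma_\alpha(\lambda(A))^{-1}$ lands in $Aut_NTP$ because $\lambda(\beta_\alpha(A))=\lambda(A)\lambda(A)^{-1}=\id_{TM}$; it restricts to the identity on $Aut_NTP$ (since $\sigma_\alpha(\id_{TM})=\id_{TP}$ by (\ref{iddecomp}) and (\ref{projv})), giving $\beta_\alpha\circ\iota=\id$ and surjectivity; and $\beta_\alpha(A)=\id_{TP}\iff A=\sigma_\alpha(\lambda(A))$ yields $\im\sigma_\alpha=\beta_\alpha^{-1}(\id_{TP})$. The twisted identity for $\beta_\alpha(A_1A_2)$ is then formal: expand it as $A_1A_2\,\sigma_\alpha(\lambda(A_2))^{-1}\sigma_\alpha(\lambda(A_1))^{-1}$ using the morphism properties of $\lambda$ and $\sigma_\alpha$, and substitute $A_i=\beta_\alpha(A_i)\sigma_\alpha(\lambda(A_i))$.

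For (iii) the decomposition (\ref{Apdecomp}) is simply the tautology $A=\beta_\alpha(A)\,\sigma_\alpha(\lambda(A))$, with $\tilde A=\lambda(A)$ and $\id+B=\beta_\alpha(A)$ uniquely determined, so $A\mapsto(\lambda(A),\beta_\alpha(A)-\id)$ is a bijection onto $Aut_0TM\times End_NTP$. To match the transported product with (\ref{prod}) I would feed the twisted identity into this bijection: writing $S_1:=\sigma_\alpha(\tilde A_1)(p)$, one gets $\beta_\alpha(A_1A_2)(p)=(\id_p+B_1(p))\,S_1(\id_p+B_2(p))S_1^{-1}$. Here comes the crux. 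Since $\im B_2(p)\subset T^v_pP$ and $S_1\in Aut_{TG}TP$ fixes $T^v_pP$ by (\ref{Ap1}), one has $S_1B_2(p)=B_2(p)$, so $S_1(\id_p+B_2(p))S_1^{-1}=\id_p+B_2(p)S_1^{-1}$; the product with $\id_p+B_1(p)$ linearizes to $\id_p+B_1(p)+B_2(p)S_1^{-1}$ because $\im B\subset T^v_pP\subset\ker B$ by (\ref{Im}); and finally, using $S_1^{-1}=\sigma_\alpha(\tilde A_1^{-1})(p)=\Pi^v_\alpha(p)+\Gamma_\alpha(p)\circ\tilde A_1^{-1}(\mu(p))\circ T\mu(p)$ together with $B_2(p)\circ\Pi^v_\alpha(p)=0$, one obtains exactly the second component of (\ref{prod}); the first component is $\lambda(A_1A_2)=\tilde A_1\tilde A_2$.

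I expect the main obstacles to be the equivariance check (\ref{Ap2}) for $\sigma_\alpha(\tilde A)$ and, above all, the product computation in (iii): the content of the semidirect-product structure sits entirely in the collapse of the conjugation $S_1B_2S_1^{-1}$ to the one-sided twist $B_2\mapsto B_2\circ\Gamma_\alpha(p)\circ\tilde A_1^{-1}(\mu(p))\circ T\mu(p)$, which relies precisely on the nilpotence relation (\ref{Im}) and on elements of $Aut_{TG}TP$ fixing the vertical subbundle. The remaining verifications are routine bookkeeping with the projection identities (\ref{projv}) and the cocycle algebra of $\beta_\alpha$.
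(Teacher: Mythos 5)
Your proposal is correct and follows essentially the same route as the paper's proof: the section $\sigma_\alpha$ built from $\Pi^v_\alpha$ and $\Gamma_\alpha$, the retraction $\beta_\alpha(A)=A\sigma_\alpha(\lambda(A))^{-1}$, the identities $\lambda\circ\sigma_\alpha=\id_{TM}$, $\beta_\alpha\circ\iota=\id_{TP}$, $\beta_\alpha^{-1}(\id_{TP})=Im\,\sigma_\alpha$, and for (iii) the same conjugation computation $\sigma_\alpha(\tilde A_1)(\id+B_2)\sigma_\alpha(\tilde A_1)^{-1}=\id+B_2\circ\Gamma_\alpha\circ\tilde A_1^{-1}\circ T\mu$ collapsing via the nilpotence relation (\ref{Im}). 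You merely spell out several verifications the paper leaves implicit (well-definedness of $\lambda$, membership $\sigma_\alpha(\tilde A)\in Aut_{TG}TP$, and the twisted cocycle identity for $\beta_\alpha$), which is a welcome but not substantively different elaboration.
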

\begin{proof}  From the definition (\ref{piA}) of $\pi$, for all $A_1,A_2 \in Aut_{TG}TP$, we obtain that 
$$ \lambda(A_1A_2)(\mu(p))(T\mu(p)v_p)=(T\mu(p)\circ A_1(p))(A_2(p)v_p)=$$
$$=\lambda(A_1)(\mu(p))(T\mu(p)\circ A_2(p))v_p=(\lambda(A_1)(\mu(p))\circ\lambda(A_2(\mu(p)))(T\mu(p)v_p)$$
for $T\mu(p)v_p\in T_{\mu(p)}M$. Also, for $\tilde A_1,\tilde A_2\in Aut _0 TM$, from (\ref{tildaA}) we have 
$$ \sigma_\alpha(\tilde A_1)(p)\circ \sigma_\alpha(\tilde A_2)(p)=$$
$$=(\Pi^v_\alpha(p)+\Gamma_\alpha(p)\circ \tilde A_1(\mu(p))\circ T\mu(p))(\Pi^v_\alpha(p)+\Gamma_\alpha(p)\circ \tilde A_2(\mu(p))\circ T\mu(p))=$$
$$=\Pi^v_\alpha(p)+\Gamma_\alpha(p)\circ \tilde A_1(\mu(p))\circ T\mu(p)\circ\Gamma_\alpha(p)\circ \tilde A_2(\mu(p))\circ T\mu(p)=$$
$$=\Pi^v_\alpha(p)+\Gamma_\alpha(p)\circ \tilde A_1(\mu(p))\circ  \tilde A_2(\mu(p))\circ T\mu(p)=\sigma_\alpha(\tilde A_1\tilde A_2)(p).$$
In order to obtain these equalities we used ${\Pi_\alpha^v}^2=\Pi_\alpha^v$ and $\Pi_\alpha^v\circ \Gamma_\alpha(p)=0$ and $T\mu(p)\circ \Pi_\alpha^v=0$. Additionally we  have the following equalities
$$(\beta_\alpha\circ\iota)(\id_{TP}+B)=(\id_{TP}+B)\sigma_\alpha(\id_{TP})=\id_{TP}+B,$$
$$(\lambda\circ\beta_\alpha)(A)=\lambda(A\sigma_\alpha(\lambda(A))^{-1})=\lambda(A)\lambda(A)^{-1}=\id_{TM}$$
and
$$ (\lambda\circ \sigma_\alpha)(\tilde A)(\mu(p))(T\mu(p)v_p)=(\lambda(\sigma_\alpha(\tilde A))(\mu(p))(T\mu(p)v_p)=T\mu(p)(\sigma_\alpha(\tilde A)(p)v_p)=$$
$$=T\mu(p)(\Pi^v_\alpha(p)+\Gamma_\alpha(p)\circ \tilde A(\mu(p))\circ T\mu(p))v_p=\tilde A(\mu(p))(T\mu(p)v_p).$$

\bigskip

Let  us also note that $\beta_\alpha(A)=\id_{TP}$  if and only if $A=\sigma_\alpha(\lambda(A))$. This implies that $ \beta_\alpha^{-1}(\id_{TP})=Im \sigma_\alpha$. Summing up the above statements we prove the points (i) and (ii) of the proposition.

In order to prove (iii) we  first note  that  the decomposition (\ref{Apdecomp}) of $A\in Aut_{TG}TP$ into the product of $\id_{TP}+B\in Aut_N TP$ and $\sigma_\alpha(\tilde A)\in\sigma_\alpha(Aut_0 TM)$ follows from $\lambda\circ\sigma_\alpha=\id_{TM}$ and from $Aut_NTP=ker\lambda$.  Taking this fact into account one finds 
$$ \sigma_\alpha(\tilde A)(p)(\id_p+B(p)) \sigma_\alpha(\tilde A^{-1})(p)=\id_p+B(p)\circ \Gamma_\alpha(p)\circ \tilde A^{-1}(\mu(p))\circ T\mu(p)$$
which yields (\ref{prod}).

\end{proof}

The structural properties of $Aut_{TG}TP$ described in the above proposition will be useful for the subsequent considerations.

\section{ The action of $ Aut_{TG}TP$ on the space of connections}\label{sec3}

In this section we describe the relationship between 
 the space $Conn P(M,G)$ of all connections on $P(M,G)$ and the groups from the short exact sequence (\ref{ses}). To this end we define  by\be\label{phiA} \phi_A(\alpha)_p:=\alpha_p\circ A(p)^{-1}\ee
the left action $\phi_A:Conn P(M,G)\to Conn P(M,G)$ of $Aut_{TG}TP$ on $ConnP(M,G)$, i.e. $\phi$ satisfies $\phi_{A_1A_2}=\phi_{A_1}\circ\phi_{A_1}$ for $A_1,A_2\in Aut_{TG}TP$.


\begin{prop}\label{prop:3} For the groups $Aut_{TG}TP$, $Aut_N TP$ and $Aut_0TM$ one has:

(i) The action  of $Aut_{TG}TP$ defined in (\ref{phiA}) is transitive.

(ii) The horizontal lift  $\Gamma_\alpha$ defined by $\alpha\in ConnP(M,G)$, see (\ref{isom}), satisfies the relation
\be\label{equivA} A(p)\circ \Gamma_\alpha(p)=\Gamma_{\phi_{A}(\alpha)}(p)\circ \lambda(A)(\mu(p)) \ee
for all $A\in Aut_{TG}TP$.

(iii) The action (\ref{phiA}) restricted to the subgroup $Aut_N TP$ is  free and transitive.

(iv) The subgroup $\sigma_\alpha(Aut_0TM)$ is the stabilizer of $\alpha$ with respect to the action (\ref{phiA}).

\end{prop}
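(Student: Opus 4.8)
The plan is to prove the four assertions essentially in order, exploiting the defining conditions (\ref{Ap1}), (\ref{Ap2}) for elements of $Aut_{TG}TP$ together with the connection identities (\ref{alpha1}), (\ref{alpha2}) and the projection formulas (\ref{projv}). First I would verify that $\phi_A(\alpha)$ is again a connection: condition (\ref{alpha1}) for $\phi_A(\alpha)$ follows by composing $\alpha_p\circ A(p)^{-1}$ with $T\kappa_p(e)$ and using (\ref{Ap1}) in the form $A(p)^{-1}\circ T\kappa_p(e)=T\kappa_p(e)$, which reduces the expression to $\alpha_p\circ T\kappa_p(e)=\id_{T_eG}$; condition (\ref{alpha2}) follows analogously from (\ref{Ap2}) inverted, namely $A(pg)^{-1}\circ T\kappa_g(p)=T\kappa_g(p)\circ A(p)^{-1}$. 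This confirms $\phi_A$ is well-defined on $ConnP(M,G)$.

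For part (ii) I would compute both sides of (\ref{equivA}) applied to a tangent vector of $M$. The key observation is that $\Gamma_{\phi_A(\alpha)}(p)$ is the horizontal lift for the new connection, whose horizontal space is $Ker(\phi_A(\alpha))_p=Ker(\alpha_p\circ A(p)^{-1})=A(p)(Ker\,\alpha_p)=A(p)(T^{\alpha,h}_pP)$. Thus $A(p)\circ\Gamma_\alpha(p)$ maps into this new horizontal space; to identify the base-point behaviour I apply $T\mu(p)$ and use the definition (\ref{piA}) of $\lambda$ to see that both sides cover $\lambda(A)(\mu(p))$, and the uniqueness of horizontal lifts then forces the equality. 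Part (ii) is really the technical heart, and I expect it to be the main obstacle: the verification hinges on correctly tracking how $A$ intertwines the two horizontal distributions and on the compatibility $T\mu(p)\circ A(p)=\lambda(A)(\mu(p))\circ T\mu(p)$ read off from (\ref{piA}).

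For part (iii) I would use the decomposition from Proposition \ref{prop:2}(iii). Transitivity on the full group (part (i)) combined with the fact that $\sigma_\alpha(Aut_0TM)$ stabilizes $\alpha$ (part (iv)) shows that $Aut_NTP$ already acts transitively, since $Aut_{TG}TP=Aut_NTP\cdot\sigma_\alpha(Aut_0TM)$ by (\ref{Apdecomp}). Concretely, given $\alpha'\in ConnP(M,G)$, the difference $\alpha-\alpha'$ is a horizontal $T_eG$-valued one-form, which via (\ref{alpha1})--(\ref{alpha2}) and (\ref{Im}) corresponds to a unique $B\in End_NTP$ with $\phi_{\id+B}(\alpha)=\alpha'$; freeness follows because $\alpha_p\circ(\id_p+B(p))^{-1}=\alpha_p$ together with $ImB(p)\subset T^v_pP$ and (\ref{alpha1}) forces $B(p)=0$.

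For part (iv) I would show that $\phi_{\sigma_\alpha(\tilde A)}(\alpha)=\alpha$ directly from the explicit formula (\ref{tildaA}): since $\alpha_p\circ\Pi^v_\alpha(p)=\alpha_p$ (because $\Pi^v_\alpha(p)=T\kappa_p(e)\circ\alpha_p$ and $\alpha_p\circ T\kappa_p(e)=\id$) while $\alpha_p\circ\Gamma_\alpha(p)=0$ (as $Im\,\Gamma_\alpha(p)$ is the horizontal space), one computes $\alpha_p\circ\sigma_\alpha(\tilde A)(p)=\alpha_p$, hence $\alpha_p\circ\sigma_\alpha(\tilde A)(p)^{-1}=\alpha_p$. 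Conversely, if $\phi_A(\alpha)=\alpha$ then by the freeness established in (iii) the $Aut_NTP$-component $\beta_\alpha(A)=\id_{TP}$, so $A=\sigma_\alpha(\lambda(A))\in\sigma_\alpha(Aut_0TM)$. This identifies the stabilizer exactly, completing the proof.
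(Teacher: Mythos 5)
Your proof is correct, and for parts (ii)--(iv) it follows essentially the paper's own route: the paper derives (ii) from exactly your observation that $Ker\,\phi_A(\alpha)_p=A(p)\,Ker\,\alpha_p$ (combined, as you do, with projecting by $T\mu(p)$ and uniqueness of horizontal lifts), proves (iii) with precisely your $B(p)=T\kappa_p(e)\circ(\alpha_p-\alpha'_p)$, which is its formula (\ref{Bp}), and dismisses (iv) as ``straightforward verification'', which you usefully spell out, including the converse via the decomposition (\ref{Apdecomp}) and freeness. The one genuine divergence is part (i): the paper proves transitivity of the full group directly, by defining $A(p)v_p:=v_p^v+v_p^{h'}$ (vertical part taken with respect to $\alpha$, horizontal part with respect to $\alpha'$) and checking (\ref{Ap1})--(\ref{Ap2}), whereas you obtain (i) only as a by-product of the $Aut_NTP$-construction in (iii). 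That is legitimate and in fact yields a stronger statement (the subgroup already acts transitively), but note that your appeal in (iii) to ``transitivity on the full group (part (i)) combined with (iv)'' is circular as written, since your part (i) paragraph establishes only well-definedness of $\phi_A$; the circle closes because your ``concretely'' sentence proves $Aut_NTP$-transitivity from scratch. The clean ordering of your material is: well-definedness of $\phi_A$; the computation $\alpha_p\circ\sigma_\alpha(\tilde A)(p)=\alpha_p$ (forward half of (iv)); the $B$-construction giving transitivity of $Aut_NTP$ and hence of $Aut_{TG}TP$; freeness from $\alpha_p\circ B(p)=0$ together with injectivity of $\alpha_p$ on $T^v_pP$; and finally the converse of (iv) from (\ref{Apdecomp}) plus freeness. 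Comparing what each approach buys: the paper's mixed-projector automorphism settles (i) in one step with no reliance on Proposition \ref{prop:2}, while your route exhibits $ConnP(M,G)$ explicitly as a torsor over the abelian group $Aut_NTP\cong End_NTP$, from which both (i) and the stabilizer description in (iv) fall out structurally.
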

\begin{proof}

(i) In order to show that $\phi_A(\alpha)\in  Conn P(M,G)$ we note that it satisfies conditions (\ref{alpha1}) and (\ref{alpha2}) if $A$ satisfies  (\ref{Ap1}) and (\ref{Ap2}). Next let us take the decompositions 
$$\begin{array}{l}
v_p=v_p^v+v_p^h\\
v_p=v_p^{v'}+v_p^{h'}\end{array}$$
of $v_p\in T_pP$ on the vertical and horizontal parts with respect to the connections $\alpha$ and $\alpha '$, respectively. One easily sees that $A(p):T_pP\to T_pP$, defined by 
\be \label{Ap} A(p)v_p:=v_p^v+v_p^{h'},\ee
satisfies  conditions (\ref{Ap1}) and (\ref{Ap2}) and $\alpha'=\phi_A(\alpha)$. Thus 
$A\in  Aut_{TG}(TP)$ and  the action (\ref{phiA}) is transitive.

(ii) The equivariance property (\ref{equivA}) follows from $Ker\phi_A(\alpha_p)=A(p)Ker\alpha_p$ and from 
$T^{\alpha,h}_pP=Ker\alpha_p.$

(iii) For any two connections $\alpha$, $\alpha '$  their difference $\alpha-\alpha '$ is a $T_eG$-valued  tensorial one-form, i.e. $Ker(\alpha-\alpha ')=T^v_pP$ and $(\alpha_{pg}-\alpha '_{pg})\circ T\kappa_g(p)=Ad_{g^{-1}}(\alpha_p-\alpha '_p)$ for any $p\in P$. Since, for any $p\in P$,  $\alpha$ define  the vector space isomorphism $\alpha_p:T^v_pP\to T_eG$ which is the inverse to $T\kappa_p(e):T_eG\to T^v_pP$, see (\ref{alpha1}), it follows that 
\be\label{Bp} B(p):=T\kappa_p(e)\circ (\alpha_p-\alpha_p ')\ee
satisfies (\ref{Im}). From (\ref{Bp}) one obtains $\alpha'=\phi_A(\alpha)$ where $A(p):=\id_p+B(p)$. The above proves point (iii).

(iv) Straightforward verification.
\end{proof}

The next proposition shows that one can define the subgroup $Aut_{TG}TP\subset Aut_0 TP$ in terms of the connection space  $Conn P(M,G)$.
\begin{prop}\label{prop:4} If $A\in Aut_0(TP)$ and $\phi_A( Conn P(M,G))\subset Conn P(M,G)$ then $A\in  Aut_{TG}(TP)$.\end{prop}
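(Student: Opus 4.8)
My plan is to verify the two pointwise conditions (\ref{Ap1}) and (\ref{Ap2}) of Proposition \ref{prop:1} directly, since together they are equivalent to $A\in Aut_{TG}TP$. The whole argument rests on a single linear-algebra observation about the connection space, which I would isolate first: at every point $p\in P$,
\[\bigcap_{\alpha\in ConnP(M,G)}Ker\,\alpha_p=\{0\}.\]
Indeed, each $Ker\,\alpha_p=T^{\alpha,h}_pP$ is a complement of the vertical subspace $T^v_pP$ in $T_pP$. A nonzero vector in $T^v_pP$ lies in no such complement, while for $w\notin T^v_pP$ (so $T\mu(p)w\neq0$) one can add to any chosen connection a tensorial $T_eG$-valued one-form not annihilating $w$ at $p$, producing a connection whose horizontal space misses $w$; hence no nonzero vector survives the intersection. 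I expect this lemma to be the only real obstacle: it is where the transitivity/flexibility of $ConnP(M,G)$ (its affine structure over the tensorial forms) is used, and the delicate point is extending a prescribed value of a tensorial form at $p$ to a global one via equivariance along the fibre and a cut-off. Once it is granted, both conditions drop out by subtracting the defining relations of a connection written for $\alpha$ and for $\phi_A(\alpha)$.

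For (\ref{Ap1}) I would fix $p$ and an arbitrary $\alpha\in ConnP(M,G)$. Both $\alpha$ and $\phi_A(\alpha)$ satisfy (\ref{alpha1}), and since $\phi_A(\alpha)_p=\alpha_p\circ A(p)^{-1}$ by (\ref{phiA}) this reads $\alpha_p\circ A(p)^{-1}\circ T\kappa_p(e)=\id_{T_eG}=\alpha_p\circ T\kappa_p(e)$. Subtracting gives $\alpha_p\circ(A(p)^{-1}-\id_p)\circ T\kappa_p(e)=0$, so the map $(A(p)^{-1}-\id_p)\circ T\kappa_p(e)$, which does not depend on $\alpha$, takes values in $Ker\,\alpha_p$ for every connection, hence in the trivial intersection above. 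Thus $A(p)^{-1}\circ T\kappa_p(e)=T\kappa_p(e)$, and applying $A(p)$ yields (\ref{Ap1}).

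For (\ref{Ap2}) I would fix $p$ and $g$ and apply (\ref{alpha2}) to $\phi_A(\alpha)$, obtaining $\alpha_{pg}\circ A(pg)^{-1}\circ T\kappa_g(p)=\Ad_{g^{-1}}\circ\alpha_p\circ A(p)^{-1}$. Rewriting the right-hand side by means of (\ref{alpha2}) for $\alpha$ itself, namely $\Ad_{g^{-1}}\circ\alpha_p=\alpha_{pg}\circ T\kappa_g(p)$, turns this into $\alpha_{pg}\circ[A(pg)^{-1}\circ T\kappa_g(p)-T\kappa_g(p)\circ A(p)^{-1}]=0$. As before, the bracketed map is independent of $\alpha$ and therefore lands in $\bigcap_\alpha Ker\,\alpha_{pg}=\{0\}$, so $A(pg)^{-1}\circ T\kappa_g(p)=T\kappa_g(p)\circ A(p)^{-1}$. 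Composing with $A(pg)$ on the left and $A(p)$ on the right, and using that $T\kappa_g(p)$ is an isomorphism, gives $A(pg)\circ T\kappa_g(p)=T\kappa_g(p)\circ A(p)$, which is (\ref{Ap2}). By Proposition \ref{prop:1} this proves $A\in Aut_{TG}TP$.
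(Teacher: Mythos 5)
Your proposal is correct and follows essentially the same route as the paper: both reduce the claim, via Proposition \ref{prop:1}, to the key lemma that $\bigcap_{\alpha\in Conn P(M,G)}Ker\,\alpha_p=\{0\}$ at every $p\in P$, and obtain (\ref{Ap1}) and (\ref{Ap2}) by applying the connection axioms (\ref{alpha1})--(\ref{alpha2}) to $\phi_A(\alpha)$ and subtracting. The only (minor) variation is in how the lemma is proved --- you perturb a fixed connection by a tensorial one-form not annihilating a given non-vertical vector, whereas the paper constructs a connection with an arbitrarily prescribed horizontal subspace at $p$ via local triviality and a partition of unity --- and both versions rest on the same globalization-by-cut-off argument, which you correctly flag as the delicate point.
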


\begin{proof} Let $A\in Aut(TP)$ be such that $\phi_A(Conn P(M,G))\subset Conn P(M,G)$ then for any $\alpha\in ConnP(M,G)$ one has 
$$ \alpha_p\circ A(p)\circ T\kappa_p(e)=\alpha_p\circ T\kappa_p(e)$$
and
$$ \alpha_{pg}\circ A(pg)^{-1}\circ T\kappa_g(e)=\alpha_{pg}\circ T\kappa_g(p)\circ A(p)^{-1}.$$
The above equalities imply   (\ref{Ap1}) and (\ref{Ap2}) if 
$$\label{cap} \bigcap_{\alpha\in Conn P(M,G)} Ker \ \alpha_p=\{0\}$$
for any $p\in P$.
In order to prove (\ref{cap}) we observe that for any vector subspace $H_p\subset T_pP$ transversal to $T^v_pP$ and an open subset 
 $\Omega\subset M$ such that $\mu^{-1}(\Omega)\cong G\times \Omega_p$ and $\mu(p)\in \Omega$ there exists a local connection form $\alpha$ on $\mu^{-1} (\Omega)$ for which $Ker\ \alpha_p=H_p$. Assuming paracompactness of $M$ and using the decomposition of the unity for the properly chosen covering of $M$ by $\Omega_p$ one can construct a connection form $\alpha$ on $M$ such that  $H_p=Ker\ \alpha_p$. \end{proof}


Let us note that, given an arbitrary  $c\in \mathbb{R}\setminus \{0\}$, any connection $\alpha\in Conn P(M,G)$ defines 
 a multiplicative one-parameter subgroup of $ Aut_{TG}TP$, i.e.
$A_\alpha^{c_1}\circ A_\alpha^{c_2}=A_\alpha^{c_1c_2}$, for $c_1,c_2\in \mathbb{R}\setminus \{0\}$ by 
$$\label{Aalpha} A_\alpha^c(p):=\Pi_\alpha^v(p)+c\Pi_\alpha^h(p)=\sigma_\alpha(c \ \id_{TM}).$$


\section{ Space of generalized canonical forms on $T^*P$}\label{sec4}

We recall for further considerations that the standard symplectic form on $T^*P$ is $\omega_0=d\gamma_0$, where $\gamma_0\in C^\infty T^*(T^*P)$ is the canonical one-form on $T^*P$ defined at $\varphi\in T^*P$ by 
$$\label{gamma0} \langle\gamma_{0\varphi},\xi_\varphi\rangle:=\langle\varphi, T\pi^*(\varphi)\xi_\varphi\rangle,$$
where $\pi^*:T^*P\to P$ is the projection of $T^*P$ on the base and $\xi_\varphi\in T_\varphi(T^*P)$.

Let us mention also that   by definition a \textit{linear vector field} on $T^*P$ is a pair $(\xi,\chi)$ of vector fields $\xi\in C^\infty T(T^*P)$ and $\chi\in C^\infty TP$ such that 
 \unitlength=5mm $$\label{bundle}\begin{picture}(11,4.6)
    \put(1,4){\makebox(0,0){$T^*P$}}
    \put(8,4){\makebox(0,0){$T(T^*P)$}}
    \put(1,0){\makebox(0,0){$P$}}
    \put(8,0){\makebox(0,0){$TP$}}
     \put(1,3){\vector(0,-1){2}}
    \put(8,3){\vector(0,-1){2}}
       \put(2.5,4){\vector(1,0){3.7}}
    \put(2.7,0){\vector(1,0){3.7}}
    \put(0.4,2){\makebox(0,0){$\pi^*$}}
      \put(9,2){\makebox(0,0){$T\pi^*$}}
     \put(4.5,4.3){\makebox(0,0){$\xi$}}
    \put(4.5,0.5){\makebox(0,0){$ \chi $}}
    \end{picture}$$
    \newline
		defines  a morphism of vector bundles. Note here that $T\pi^*(\varphi)\xi_\varphi=\chi_{\pi^*(\varphi)}$. Regarding the theory of linear vector fields over vector bundles see e.g. Section 3.4 of \cite{Mackenzie:GT}, where their various properties are discussed.

		In the sequel we will denote by $LinC^\infty T(T^*P)$ the Lie algebra of linear vector fields over the vector bundle $\pi^*:T^*P\to P$. The Lie bracket of $(\xi_1,\chi_1)$, $(\xi_2,\chi_2)\in LinC^\infty T(T^*P)$ is defined by 
		$$[(\xi_1,\chi_1),(\xi_2,\chi_2)]:=([\xi_1,\xi_2],[\chi_1,\chi_2])$$
		and the vector space structure  on $LinC^\infty T(T^*P)$ by 
		$$c_1(\xi_1,\chi_1)+c_2(\xi_2,\chi_2):=(c_1\xi_1+c_2\xi_2,c_1\chi_1+c_2\chi_2).$$
		
		Let  $LinC^\infty( T^*P)$ denote the vector space of smooth fibre-wise linear functions on $T^*P$. Notice that spaces $LinC^\infty (T(T^*P))$ and $Lin C^\infty(T^*P)$ have structures of $C^\infty(P)$-modules defined by 
		$f(\xi,\chi):=((f\circ\pi^*)\xi,f\chi)$ and by $fl:=(f\circ\pi^*)l$, respectively, where $f\in C^\infty(P)$ and $l\in Lin C^\infty(T^*P)$.
		
		\begin{defn}\label{def1} A differential one-form $\gamma\in C^\infty T^*(T^*P)$ is called a \textsl{generalized canonical form} on $T^*P$ if:
		\ben
		\item[(i)] $\gamma_\varphi\not=0$ for any $\varphi\in T^*P$,
		\item[(ii)]  $ker T\pi^*(\varphi)\subset ker\ \gamma_\varphi:=\{\xi_\varphi\in T_\varphi(T^*P):\ \langle \gamma_\varphi,\xi_\varphi\rangle=0\} $,
		\item[(iii)]  $\langle \gamma,\xi\rangle\in Lin C^\infty(T^*P)$ for any $\xi\in Lin C^\infty T(T^*P)$.
		\een
				\end{defn}

		
The space of generalized canonical forms on $T^*P$  will be denoted by $Can T^*P$. Let us note here that $\gamma_0\in Can T^*P$.


		\begin{prop}\label{prop:5}
		\ben
		\item[(i)]
		The map 
	$\label{AAut0} \Theta:Aut_0 TP\to Can T^*P$
		defined by 
	\be\label{gammaA} \langle\Theta(A)_\varphi,\xi_\varphi\rangle:=\langle\varphi,A(\pi^*(\varphi)) T\pi^*(\varphi))\xi_\varphi\rangle,\ee
	where $\xi_\varphi\in T_\varphi(T^*P)$, is  bijective.
		\item[(ii)]  The natural  left action 
		$\label{Gamma*} L^*:Aut_0TP\times CanT^*P\to Can T^*P$
		of $Aut _0 TP$ on $Can T^*P$ defined by 
		\be\label{A*} \langle (L^*_A(\gamma))_\varphi,\xi_\varphi\rangle:=\langle \gamma_{A^*(\varphi)},TA^*(\varphi)\xi_\varphi\rangle,\ee
		 where $A^*:T^*P\to T^*P$ is the dual of $A\in Aut_0 TP$, is a transitive and free action.
		Furthermore, 
		\be\label{phi*} {L}^*_A\circ\Theta=\Theta\circ L_A,\ee
		where $L_A {A}':=A{A}'$, i.e. 
	 ${L}^*_A\Theta(A')=\Theta(AA').$
				\een
		\end{prop}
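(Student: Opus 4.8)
The plan is to establish (i) first—that $\Theta$ is a bijection—and then to deduce essentially all of (ii) from the single intertwining identity (\ref{phi*}).

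For (i) I would first check that $\Theta(A)$ genuinely lies in $CanT^*P$. Condition (ii) of Definition \ref{def1} is immediate, since $\langle\Theta(A)_\varphi,\xi_\varphi\rangle$ is built by first applying $T\pi^*(\varphi)$ to $\xi_\varphi$, so any $\xi_\varphi\in\ker T\pi^*(\varphi)$ is killed. For condition (iii), if $(\xi,\chi)\in LinC^\infty T(T^*P)$ then $T\pi^*(\varphi)\xi_\varphi=\chi_{\pi^*(\varphi)}$, whence $\langle\Theta(A)_\varphi,\xi_\varphi\rangle=\langle\varphi,A(\pi^*(\varphi))\chi_{\pi^*(\varphi)}\rangle$, which is manifestly fibre-wise linear in $\varphi$—it is the linear function associated with the vector field $p\mapsto A(p)\chi_p$. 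Condition (i) follows from the invertibility of each $A(p)$ together with the surjectivity of $T\pi^*(\varphi)$: for $\varphi\neq0$ the composite $A(\pi^*(\varphi))\circ T\pi^*(\varphi)$ is onto $T_{\pi^*(\varphi)}P$, so $\langle\varphi,\cdot\rangle$ cannot vanish identically on its image. Injectivity of $\Theta$ is the same surjectivity-plus-nondegeneracy argument run backwards: $\Theta(A_1)=\Theta(A_2)$ forces $\langle\varphi,A_1(p)w\rangle=\langle\varphi,A_2(p)w\rangle$ for all $p$, all $\varphi\in T_p^*P$ and all $w\in T_pP$, hence $A_1(p)=A_2(p)$.

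The substance of (i) is surjectivity, and this is where the three defining conditions must be used in concert. Given $\gamma\in CanT^*P$, condition (ii) lets me factor $\gamma_\varphi$ through $T\pi^*(\varphi)$, producing $\bar\gamma_\varphi\in T^*_{\pi^*(\varphi)}P$ with $\langle\gamma_\varphi,\xi_\varphi\rangle=\langle\bar\gamma_\varphi,T\pi^*(\varphi)\xi_\varphi\rangle$. Choosing for each vector field $\chi$ on $P$ a linear vector field over it (for instance its cotangent lift), condition (iii) says that $\varphi\mapsto\langle\bar\gamma_\varphi,\chi_{\pi^*(\varphi)}\rangle$ is fibre-wise linear; since every $w\in T_pP$ is $\chi_p$ for some $\chi$, the pairing $(\varphi,w)\mapsto\langle\bar\gamma_\varphi,w\rangle$ is bilinear on $T^*_pP\times T_pP$. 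By nondegeneracy of the natural pairing this bilinear form is represented by a unique linear map $A(p)\colon T_pP\to T_pP$ with $\langle\bar\gamma_\varphi,w\rangle=\langle\varphi,A(p)w\rangle$, and then $\gamma=\Theta(A)$ by construction. Invertibility of $A(p)$—so that $A\in Aut_0TP$—comes from condition (i): were $A(p)$ not onto, some $\varphi\neq0$ would annihilate its image and give $\gamma_\varphi=0$. Smoothness of $p\mapsto A(p)$ is inherited from that of $\gamma$.

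For (ii) the key step is the direct verification of (\ref{phi*}), after noting that $A^*\colon T^*P\to T^*P$, $A^*(\varphi)=A(\pi^*(\varphi))^*\varphi$, is a bundle automorphism covering the identity, so that $\pi^*\circ A^*=\pi^*$ and $A^*$ is a diffeomorphism. Starting from (\ref{A*}) with $\gamma=\Theta(A')$ and unfolding (\ref{gammaA}), the chain rule together with $\pi^*\circ A^*=\pi^*$ collapses $T\pi^*(A^*(\varphi))\circ TA^*(\varphi)$ to $T\pi^*(\varphi)$, and the defining property $\langle A^*(\varphi),w\rangle=\langle\varphi,A(\pi^*(\varphi))w\rangle$ of the dual then turns the expression into $\langle\varphi,A(p)A'(p)T\pi^*(\varphi)\xi_\varphi\rangle=\langle\Theta(AA')_\varphi,\xi_\varphi\rangle$, which is exactly (\ref{phi*}). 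With (\ref{phi*}) in hand, and $\Theta$ a bijection by (i), one has $L^*_A=\Theta\circ L_A\circ\Theta^{-1}$; since $L_A(A')=AA'$ is the left regular action of the group $Aut_0TP$ on itself—visibly free and transitive—the conjugated family $L^*$ is a free and transitive left action of $Aut_0TP$ on $CanT^*P$, and in particular each $L^*_A$ preserves $CanT^*P$. The main obstacle I anticipate is the surjectivity step of (i): extracting a smooth, fibre-wise-linear, invertible bundle endomorphism $A$ out of the abstract form $\gamma$, for which the identification of $LinC^\infty(T^*P)$ with vector fields on $P$ and the perfectness of the pairing $T^*_pP\times T_pP\to\R$ are the essential ingredients.
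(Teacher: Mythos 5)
Your proposal is correct and follows essentially the same route as the paper: the same injectivity argument via nondegeneracy of the pairing, the same surjectivity argument (factor $\gamma$ through $T\pi^*$ using Definition~\ref{def1}(ii), use fibre-wise linearity from (iii) to extract $A\in End_0TP$ representing the pairing, and invertibility from (i)), and the same direct verification of $L^*_A\circ\Theta=\Theta\circ L_A$ using $\pi^*\circ A^*=\pi^*$, from which freeness and transitivity follow by conjugating the left regular action through the bijection $\Theta$. Your explicit check that $\Theta(A)$ actually satisfies all three conditions of Definition~\ref{def1} is a small completeness bonus the paper leaves implicit, but it does not change the method.
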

		\begin{proof} (i) If $\Theta(A_1)=\Theta(A_2)$, then using (\ref{A*}), for any $T\pi^*(\varphi)\xi_\varphi\in T_{\pi^*(\varphi)}P$ we obtain 
		$$\langle (A_1(\pi^*(\varphi))^*-A_2(\pi^*(\varphi))^*)\varphi, T\pi^*(\varphi)\xi_\varphi\rangle=0.$$
		This gives $A_1=A_2$. So, $\Theta$ is an injection.
		
		Let us take $\gamma\in CanT^*P$. Then $\langle \gamma,\xi\rangle \in Lin C^\infty (T^*P)$ if $\langle \xi,\chi\rangle\in LinC^\infty (T(T^*P))$. By virtue of the point (ii) of Definition \ref{def1} the fibre-wise linear functions $\langle \gamma,\xi\rangle$ depend only on  vector fields $\chi=T\pi^*\xi\in C^\infty TP$ and this dependence defines a morphism of $C^\infty(P)$-modules. On the other hand one can consider $\langle \gamma,\xi\rangle$ as a section of $T^{**}P\cong TP$. Thus we can represent it as
		\be\label{chi'} \langle \gamma,\xi\rangle(\varphi)=\langle\varphi,\chi'(\pi^*(\varphi)\rangle\ee
		by some vector field $\chi'\in C^\infty TP$.  The dependence between $\langle \gamma,\xi\rangle$ and $\chi'$ given by (\ref{chi'}) is also a morphism of $C^\infty(P)$-modules. Therefore, there exists $A\in End_0TP$ such that 		
				\be\label{chi'2} \chi '(p)={A}(p)\chi(p)\ee 
				and we have $\gamma=\Theta(A)$. 
			Substituting (\ref{chi'2} ) into (\ref{chi'} ) we obtain 
				$$\label{d1}  \langle \gamma_\varphi,\xi_\varphi\rangle=\langle \varphi,A(\pi^*(\varphi))T\pi^*(\varphi)\xi_\varphi\rangle=
				\langle A(\pi^*(\varphi))^*\varphi,T\pi^*(\varphi)\xi_\varphi\rangle$$
				and, thus $\gamma=\Theta(A)$.
				
										Let us assume that $A\notin Aut_0TP$. Then there exists $\varphi$ such that $A(\pi^*(\varphi))^*\varphi=0$. From (\ref{d1}) we see that for this $\varphi$ we have $\gamma_\varphi=0$, which contradicts the point (i) of Definition \ref{def1}. So, $A\in Aut_0TP$ and thus $\Theta$ is a surjection. The above proves (i).


(ii) Since any element of $Can T^*P$ can be written as $\Theta(A')$ for some ${A}'\in Aut_0 TP$ we obtain from the definition (\ref{A*}) that 
$$ \langle L^*_A(\Theta(A'))_\varphi,\xi_\varphi\rangle=\langle\Theta(A')_{A^*(\varphi)},TA^*(\varphi)\xi_\varphi\rangle=
\langle{A^*(\varphi)},{A}'(\pi^*(\varphi))\circ T\pi^*(\varphi)\circ TA^*(\varphi)\xi_\varphi\rangle=$$
$$=\langle{\varphi}, A(\pi^*(\varphi))\circ{A}'(\pi^*(\varphi))\circ T(\pi^*\circ A^*)(\varphi)\xi_\varphi\rangle=
\langle{\varphi}, A(\pi^*(\varphi))\circ{A}'(\pi^*(\varphi))\circ T(\pi^*)(\varphi)\xi_\varphi\rangle=$$
$$=\langle \Theta(AA')_\varphi,\xi_\varphi\rangle,$$
which  proves 
 (\ref{phi*}). From (\ref{phi*}) and from the point (i) of the proposition it follows that  $L^*$ is a transitive and free action. 
		\end{proof}
		
		From the above proposition we conclude that $\gamma\in CanT^*P$ is the pull-back $\gamma=\Theta(A)=L^*_A\gamma_0$ of the canonical form $\gamma_0$. So, $\omega_A:=d\Theta(A)$ is a symplectic form.
		
		\bigskip

		  The lift  $\Phi^*_g:T^*P\to T^*P$ of the action $\kappa_g:P\to P$ to the cotangent bundle $T^*P$ is defined by 
		\be\label{62} \Phi^*_g(\varphi)(pg)=(T\kappa_g(p)^{-1})^*\varphi\ee
		where $p=\pi^*(\varphi)$.
		
		If $\gamma\in Can T^*P$ is  $G$-invariant  with respect to (\ref{62}), then $\mathcal{L}_{\xi^X}\gamma=0$ for  $X\in T_eG$, where $\xi^X\in C^\infty T(T^*P)$ is the fundamental  vector field, i.e. the vector field tangent to the flow $t\to \Phi^*_{\exp tx}$. 	
		So, for a $G$-invariant symplectic  form $\omega_A=d\gamma=d\Theta(A)$ one has   
		$$ \xi^X\llcorner \omega=-d\langle J_A,X\rangle$$
			where the $G$-equivariant
							momentum map $J_A:T^*P\to T_e^*G$ is given by $ J_A=J_0\circ A^*.$
		We note here that 
		for the standard symplectic form  $\omega_0=d\gamma_0$  the momentum map  is
		$$\label{mJ0} J_0(\varphi)=\varphi\circ T\kappa_{\pi^*(\varphi)}(e)$$
		
		It is reasonable to define the space 
		$$\label{izoCan} Can_{TG} T^*P:=\Theta(Aut_{TG}TP)$$
		which is an $Aut_{TG}TP$-invariant subspace  of the space $Can T^*P$.

\begin{prop}\label{prop6} 
\ben
\item[(i)] The generalized canonical form $\Theta(A)$  belongs to $Can_{TG}T^*P$ if and only if  $(\phi_g^*)^*\Theta(A)=\Theta(A)$ and $J_A=J_0$.
\item [(ii)] One can consider $Can_{TG} T^*P$ as the orbit of the subgroup $Aut_{TG}TP\subset Aut_0 TP$ taken through $\gamma_0$ with respect to the free action  $L^*$ defined in  (\ref{A*}).
\item[(iii)] If $A\in Aut_0TP$ and ${L}^*_A(Can_{TG} T^*P)\subset Can_{TG} T^*P$ then $A\in Aut_{TG}TP$.
\een
\end{prop}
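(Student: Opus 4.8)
The plan is to reduce all three parts to three facts already in hand: the bijectivity of $\Theta$ (Proposition \ref{prop:5}(i)), the intertwining relation $L^*_A\circ\Theta=\Theta\circ L_A$ from (\ref{phi*}), and the identity $\gamma_0=\Theta(\id_{TP})$, which together with $\id_{TP}\in Aut_{TG}TP$ shows $\gamma_0\in Can_{TG}T^*P$.

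I would first settle (ii) and (iii), which are then formal. For (ii), by (\ref{phi*}) the orbit of $Aut_{TG}TP$ through $\gamma_0$ is $\{L^*_A\gamma_0=L^*_A\Theta(\id_{TP})=\Theta(A):A\in Aut_{TG}TP\}=\Theta(Aut_{TG}TP)=Can_{TG}T^*P$, and the restricted action stays free because $L^*$ is free on all of $Can T^*P$. For (iii), since $\gamma_0\in Can_{TG}T^*P$ the hypothesis gives $\Theta(A)=L^*_A\Theta(\id_{TP})=L^*_A\gamma_0\in L^*_A(Can_{TG}T^*P)\subset Can_{TG}T^*P=\Theta(Aut_{TG}TP)$, whence $A\in Aut_{TG}TP$ by injectivity of $\Theta$.

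The substantive part is (i). Because $Can_{TG}T^*P=\Theta(Aut_{TG}TP)$ and $\Theta$ is a bijection, $\Theta(A)\in Can_{TG}T^*P$ is equivalent to $A\in Aut_{TG}TP$, i.e. by Proposition \ref{prop:1} to the pair of conditions (\ref{Ap1}), (\ref{Ap2}). It therefore suffices to match (\ref{Ap1}) with $J_A=J_0$ and (\ref{Ap2}) with $G$-invariance. The first match is direct: writing $p=\pi^*(\varphi)$ and using $J_A=J_0\circ A^*$ together with the formula for $J_0$, one gets $J_A(\varphi)=\varphi\circ A(p)\circ T\kappa_p(e)$, which equals $J_0(\varphi)=\varphi\circ T\kappa_p(e)$ for every $\varphi\in T^*_pP$ exactly when $A(p)\circ T\kappa_p(e)=T\kappa_p(e)$, i.e. (\ref{Ap1}). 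For the second match I would compute the pullback of $\Theta(A)$ under the cotangent lift $\phi_g^*$ of (\ref{62}). Setting $\psi=\phi_g^*(\varphi)\in T^*_{pg}P$ and using that $\phi_g^*$ covers $\kappa_g$, so that $T\pi^*(\psi)\circ T\phi_g^*(\varphi)=T\kappa_g(p)\circ T\pi^*(\varphi)$, the defining formula (\ref{gammaA}) yields
$$\langle((\phi_g^*)^*\Theta(A))_\varphi,\xi_\varphi\rangle=\langle\varphi,\,T\kappa_g(p)^{-1}\circ A(pg)\circ T\kappa_g(p)\circ T\pi^*(\varphi)\xi_\varphi\rangle.$$
Since $T\pi^*(\varphi)$ is surjective onto $T_pP$ and $\varphi$ ranges over all of $T^*_pP$, equality of this with $\langle\Theta(A)_\varphi,\xi_\varphi\rangle=\langle\varphi,A(p)\,T\pi^*(\varphi)\xi_\varphi\rangle$ for all $g,\varphi,\xi_\varphi$ is equivalent to the operator identity $A(pg)\circ T\kappa_g(p)=T\kappa_g(p)\circ A(p)$, i.e. (\ref{Ap2}). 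This completes (i).

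I expect the main obstacle to be the bookkeeping in this last pullback computation: keeping track of the fact that $\phi_g^*$ covers $\kappa_g$ so that base points move from $p$ to $pg$, transposing $(T\kappa_g(p))^{-1}$ correctly, and checking that quantifying over both $\varphi$ and $\xi_\varphi$ genuinely strips the pairing down to the operator identity (\ref{Ap2}) rather than to a weaker pointwise statement.
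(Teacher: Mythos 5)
Your proof is correct and takes essentially the same route as the paper's: part (i) is the same pullback computation using $\pi^*\circ\Phi^*_g=\kappa_g\circ\pi^*$ and $\Phi^*_g(\varphi)=(T\kappa_g(p)^{-1})^*\varphi$ to extract (\ref{Ap2}), the same evaluation of $J_A=J_0\circ A^*$ on fundamental vector fields to extract (\ref{Ap1}), matched against Proposition \ref{prop:1}; and parts (ii)--(iii) are the same formal consequences of $L^*_A\circ\Theta=\Theta\circ L_A$ and the bijectivity of $\Theta$. The only cosmetic difference is that in (iii) you specialize to $A_1=\id_{TP}$, applying $L^*_A$ to $\gamma_0$ itself, where the paper applies $L^*_A$ to an arbitrary $\Theta(A_1)$ with $A_1\in Aut_{TG}TP$.
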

\begin{proof}
 (i) The canonical form $\Theta(A)$ is $G$-invariant if and only if 
\be\label{d1}\langle \Theta(A)_{\Phi^*_g(\varphi)},T\Phi^*_g(\varphi)\xi_\varphi\rangle=\langle\Theta(A)_\varphi,\xi_\varphi\rangle\ee
for any $g\in G$.
For the left hand side of (\ref{d1}) we have 
\be\label{proof1} \langle (\phi_g^*)^*\Theta(A)_\varphi,\xi_\varphi\rangle=\langle \Theta(A)_{\Phi^*_g(\varphi)},T\Phi^*_g(\varphi)\xi_\varphi\rangle=\ee
$$=\langle{\Phi^*_g(\varphi)},A(\pi^*(\Phi^*_g(\varphi))T\pi^*(\Phi^*_g(\varphi))T\Phi^*_g(\varphi)\xi_\varphi\rangle=$$
$$=\langle\varphi,T\kappa_g(p)^{-1}A(\pi^*\circ\Phi_g^*)(\varphi)T(\pi^*\circ\Phi_g^*)(\varphi)\xi_\varphi\rangle=
$$
$$=\langle\varphi,T\kappa_g(p)^{-1}A(\pi^*(\varphi)g)T\kappa_g(\pi^*(\varphi))T\pi^*(\varphi)\xi_\varphi\rangle=\langle\Theta(A)_\varphi,\xi_\varphi\rangle$$
for any $\varphi\in T^*P$ and $\xi_\varphi\in T_\varphi(T^*P)$. Note that the second equality in (\ref{proof1}) follows from $\pi^*\circ \Phi^*_g=\kappa_g\circ \pi^*$. From (\ref{proof1}) and from the definition (\ref{gammaA}) we obtain  the condition (\ref{Ap2}).

From  $J_A=J_0$ we have 
$$ \langle J(\varphi),X\rangle=\langle \Theta(A)_\varphi,\xi^X_\varphi\rangle=\langle \varphi,A(\pi^*(\varphi))T\kappa_{\pi^*(\varphi)}(e)X\rangle=$$
$$=\langle \varphi,T\kappa_{\pi^*(\varphi)}(e)X\rangle$$
for all $\varphi\in T^*P$ and $X\in T_eG$. This shows that an element $A\in Aut_0TP$ satisfies (\ref{Ap1}).

(ii) This statement follows from (i) and from ${L}^*_A\gamma_0=\Theta(A)$.

(iii) If ${L}^*_ACan_{TG} T^*P\subset Can_{TG} T^*P$ then ${L}^*_A\Theta(A_1)=\Theta(AA_1)\in Can_{TG} T^*P$ for any $A_1\in Aut_{TG}TP$. So, due to the property (i) we have $AA_1\in  Aut_{TG}TP$ and,  thus $A\in Aut_{TG}TP.$

\end{proof}


In Section \ref{sec2}  we  fixed a reference connection $\alpha$ in order to investigate the structure of group $Aut_{TG}TP$, see Proposition \ref{prop:2}. Now, taking into consideration  Proposition \ref{prop6} we study the structure of generalized canonical forms $\Theta(A)\in Can_{TG}T^*P$ using decompositions (\ref{Apdecomp})
 and (\ref{iddecomp}), and (\ref{projv}). We obtain 
\be\label{Ap} A(p)=\Pi^v_\alpha(p)+\Pi^v_\alpha(p)A(p)\Pi^h_\alpha(p)+\Pi^h_\alpha(p)A(p)\Pi^h_\alpha(p)=\ee
$$=T\kappa_p(e)\circ \alpha_p+(\id _{TP}+B)(p)\circ \Gamma_\alpha(p)\circ \tilde A(\mu(p))\circ T\mu(p),$$
where $\id _{TP}+B\in Aut_NTP$ and $\tilde A\in Aut_0TM$. Substituting $A$ given by (\ref{Ap}) into the definition (\ref{gammaA}) we find the corresponding formula for $\Theta(A)$
\be\label{ThetaA} \Theta(A)(\varphi)=\varphi\circ T\kappa_{\pi^*(\varphi)}\circ \alpha_{\pi^*(\varphi)}+\ee
$$+\varphi\circ (\id _{TP}+B)(\pi^*(\varphi))\circ \Gamma_\alpha(\pi^*(\varphi))\circ\tilde A((\mu\circ \pi^*)(\varphi))\circ T(\mu\circ \pi^*)(\varphi).$$
In particular cases when $\tilde A=\id_{TM}$ and $B=0$ we have
\be\label{Theta2} \Theta(\id +B)(\varphi)=\varphi\circ T\pi^*(\varphi)+\varphi\circ B(\pi^*(\varphi))\circ T\pi^*(\varphi)=\ee
$$=\varphi\circ T\pi^*(\varphi)+\varphi\circ T\kappa_{\pi^*(\varphi)}(e)\circ(\alpha'_{\pi^*(\varphi)}-\alpha_{\pi^*(\varphi)})\circ T\pi^*(\varphi)$$
and
$$ \Theta(\sigma_\alpha(\tilde A))(\varphi)=J_0(\varphi)\circ \alpha_{\pi^*(\varphi)}\circ T\pi^*(\varphi)+\varphi\circ \Gamma_\alpha(\pi^*(\varphi))\circ\tilde A((\mu\circ\pi^*)(\varphi))\circ T(\mu\circ \pi^*)(\varphi),$$
respectively.
Let us note  that $\Theta(\sigma_\alpha(\id_{TM}))=\Theta(\id_{TP})=\varphi\circ T\pi^*(\varphi)$, i.e. $\Theta(\sigma_\alpha(\id_{TM}))$ is the canonical one-form $\gamma_0$.

\begin{cor}\label{cor:7} Fixing a connection $\alpha$ one obtains from (\ref{Theta2}) an embedding $\iota_\alpha:ConnP(M,G)\hookrightarrow Can_{TG}T^*P$ of the connection space into the space of generalized canonical forms defined as follows
\be\label{idcor}\iota_\alpha(\alpha'):=\varphi\circ T\pi^*(\varphi)+\varphi\circ T\kappa_{\pi^*(\varphi)}(e)\circ(\alpha'_{\pi^*(\varphi)}-\alpha_{\pi^*(\varphi)})\circ T\pi^*(\varphi).\ee
The symplectic form $d\iota_\alpha(\alpha')$ is the pullback $L^*_{\id_{TP}+B}\omega_0$ of the standard symplectic form $\omega_0$ by the bundle morphism $(\id_{TP}+B)^*:T^*P\to T^*P$, where $\id_{TP}+B\in Aut_N TP$ is defined in (\ref{Bp}).
\end{cor}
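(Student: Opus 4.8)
The plan is to recognize $\iota_\alpha$ as the composite of two maps that have already been shown to be bijective, and then to read off the symplectic statement from the pullback description of $\Theta$. First I would note that, by (\ref{Theta2}), the right-hand side of (\ref{idcor}) is exactly of the form $\Theta(\id_{TP}+B)$ for the element $\id_{TP}+B\in Aut_NTP$ built from the pair $(\alpha,\alpha')$ as in (\ref{Bp}). Because this $B$ satisfies (\ref{Im}), we have $\id_{TP}+B\in Aut_NTP\subset Aut_{TG}TP$, and hence
$$\iota_\alpha(\alpha')=\Theta(\id_{TP}+B)\in\Theta(Aut_{TG}TP)=Can_{TG}T^*P,$$
so $\iota_\alpha$ does land in the asserted target. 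This step is essentially just the identification of (\ref{idcor}) with the special case of (\ref{ThetaA}) in which $\tilde A=\id_{TM}$.

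For the embedding claim I would establish injectivity via the two bijections involved. By point (iii) of Proposition \ref{prop:3} the action of $Aut_NTP$ on $ConnP(M,G)$ is free and transitive, so the assignment $\alpha'\mapsto\id_{TP}+B$, which is the inverse of evaluating that action at the reference connection $\alpha$, is a bijection of $ConnP(M,G)$ onto $Aut_NTP$; it is moreover affine, since $B$ depends linearly on $\alpha'-\alpha$ through (\ref{Bp}). Composing with the bijection $\Theta$ of point (i) of Proposition \ref{prop:5}, I would conclude that $\iota_\alpha=\Theta\circ(\alpha'\mapsto\id_{TP}+B)$ is an injective affine map onto the affine subspace $\Theta(Aut_NTP)$ of $Can_{TG}T^*P$, which is the required embedding in this (affine, infinite-dimensional) setting.

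The symplectic identity I would then derive from the fact that $\Theta$ realizes every generalized canonical form as a pullback. The remark following Proposition \ref{prop:5} gives $\Theta(A)=L^*_A\gamma_0$, and the definition (\ref{A*}) exhibits $L^*_A$ as the genuine pullback of forms along the smooth bundle map $A^*:T^*P\to T^*P$. Since exterior differentiation commutes with pullback along a smooth map, applying $d$ to $\iota_\alpha(\alpha')=\Theta(\id_{TP}+B)=L^*_{\id_{TP}+B}\gamma_0$ gives
$$d\iota_\alpha(\alpha')=d\,L^*_{\id_{TP}+B}\gamma_0=L^*_{\id_{TP}+B}\,d\gamma_0=L^*_{\id_{TP}+B}\omega_0,$$
which is precisely the pullback of the standard symplectic form $\omega_0$ by $(\id_{TP}+B)^*$.

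The actual computations here are light; the only real care needed is bookkeeping. I expect the main point to watch is matching the sign convention for $B$ across (\ref{Bp}), (\ref{Theta2}) and (\ref{idcor}), so that the element $\id_{TP}+B$ appearing in the symplectic identity is literally the one whose $\Theta$-image is $\iota_\alpha(\alpha')$, and interpreting \emph{embedding} in the affine category rather than demanding additional transversality. Beyond that, the statement is an immediate consequence of Propositions \ref{prop:3} and \ref{prop:5} together with the naturality of $d$ under pullback.
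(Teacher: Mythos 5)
Your argument is correct and is essentially the paper's own: the corollary is stated there as an immediate consequence of (\ref{Theta2}), with the containment and injectivity supplied by Proposition \ref{prop:3}(iii) and Proposition \ref{prop:5}(i), and the symplectic identity by $\Theta(\id_{TP}+B)=L^*_{\id_{TP}+B}\gamma_0$ together with the naturality of $d$ under pullback along $(\id_{TP}+B)^*$, exactly as you write. The sign issue you flag is genuine but lies in the paper rather than in your proof: (\ref{Bp}) defines $B(p)=T\kappa_p(e)\circ(\alpha_p-\alpha'_p)$ whereas (\ref{Theta2}) and (\ref{idcor}) use $\alpha'-\alpha$, so the element whose $\Theta$-image is $\iota_\alpha(\alpha')$ is $\id_{TP}+B$ only after replacing $B$ by $-B$ (equivalently, passing to the inverse automorphism, since $(\id_{TP}+B)^{-1}=\id_{TP}-B$ on account of $B^2=0$), which your consistency convention handles correctly.
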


Although there is no distinguished connection on a  principal $G$-bundle in general, such connections exist  in some particular cases. For example,  if the principal bundle $P(M,G)$ is trivial, $P=M\times G$, or when $P$ is a Lie group and $G$ is its subgroup. In the last case there exists the connection which is invariant with respect to the left action of $P$ on itself  and this connection is defined in a unique way, see Theorem 11.1 in \cite{Koba}.  Let us mention that  in the case when the reference connection  $\alpha$ is determined by some additional conditions, for example by the symmetry properties, then the connection $\alpha'$, see (\ref{idcor}),  can be naturally interpreted as an external field which interacts with a particle localized in the configuration space $P$.

In \cite{Mont,Sternberg} a $G$-equivariant diffeomorphism
$ I_\alpha:T^*P\stackrel{\sim}{\to} \ol P\times T^*_eG$
dependent on a fixed connection $\alpha$ was considered, where 
$$ \ol P:=\{(\tilde\varphi,p)\in T^*M\times P:\ \tilde\pi^*(\tilde\varphi)=\mu(p)\}$$
is the total space of the principal bundle $\ol P(T^*M,G)$ being the pullback of the principalbundle $P(M,G)$ to $T^*M$ by the projection $\tilde\pi^*:T^*M\to M$ of $T^*M$ on the base $M$. This diffeomorphism  is defined as follows
		$$\label{Ialpha} I_\alpha(\varphi):=({\Gamma}^*_\alpha(\pi^*(\varphi))(\varphi),\pi^*(\varphi), J_0(\varphi)).$$
		The correctness of the above definition follows from $\tilde\pi^*\circ \Gamma^*_\alpha=\mu\circ\pi^* $. The map $I_\alpha^{-1}:\ol P\times T^*_eG\to T^*P$ given by 
		\be\label{Ialphainv} I_\alpha^{-1}(\tilde\varphi,p,\chi)=\tilde\varphi\circ T\mu(p)+\chi\circ\alpha_p\ee 
	is	the inverse to $I_\alpha$.

		The natural right action of $Aut_{TG}TP$ on $T^*P$, defined for  $A\in Aut_{TG}TP$ by $(A^*\varphi)(\pi^*(\varphi)):=\varphi\circ A(\pi^*(\varphi))$, and the action of $G$ on $T^*P$ defined in (\ref{62}) transported by $I_\alpha$ to $\ol P\times T^*_eG$ are given by
			\be\label{Lambda} \Lambda_\alpha(A)(\tilde\varphi,p,\chi):=(I_\alpha\circ A^*\circ I_\alpha^{-1})(\tilde\varphi,p,\chi)=\ee
			$$=((\tilde\varphi\circ T\mu(p)+\chi\circ \alpha_p)\circ A(p)\circ \Gamma_\alpha(p),p,\chi)$$
			and by
			\be \label{86a}\psi^*_g(\tilde\varphi,p,\chi):=(I_\alpha\circ \phi^*_g\circ I_\alpha^{-1})(\tilde\varphi,p,\chi)=(\tilde\varphi,pg,Ad_{g^{-1}}^*\chi),\ee
			respectively. Setting $A=\id_{TP}+B$  or $A=\sigma_\alpha(\tilde A)$ in  (\ref{Lambda}) we obtain
		\be\label{Lambda1} \Lambda_\alpha(\id_{TP}+B)(\tilde\varphi,p,\chi)=(\tilde\varphi+\chi\circ\alpha_p\circ B(p)\circ \Gamma_\alpha(p),p,\chi)\ee
		or
		\be\label{Lambda2} \Lambda_\alpha(\sigma_\alpha(\tilde A))(\tilde\varphi,p,\chi)=(\tilde\varphi\circ\tilde A,p,\chi),\ee
		respectively.
		Summarizing, let us mention some  properties of the above two actions:\ben
		\item[(i)] The action  $\Lambda_\alpha$ of $Aut_{TG}TP$ on $ \ol P\times T^*_eG$ is  reduced to an action of $Aut_{TG}TP$ on $T^*M$ which preserves  the cotangent spaces $T^*_mM$, $m\in M$, and is realized on them by affine maps, see (\ref{Lambda}), (\ref{Lambda1}) and (\ref{Lambda2}).
		\item[(ii)] The action (\ref{86a}) of $G$  does not change $\tilde \varphi$ and commute with the action (\ref{Lambda}) of the group $Aut_{TG}TP$.
		\een

	Using $I_\alpha^{-1}:\ol P\times T_e^*G\to T^*P$ we pull  the generalized canonical form $\Theta(A)$ back to  $\ol P\times T_e^*G$. For this reason note that a vector $\xi_{(\tilde\varphi,p,\chi)}\in T_{(\tilde\varphi,p,\chi)}(T^*M\times P\times T_e^*G)$ is tangent to $\ol P\times T_e^*G\subset T^*M\times P\times T_e^*G$ if and only if 
	\be\label{89} T(\tilde\pi^*\circ pr_1)(\tilde\varphi,p,\chi)\xi_{(\tilde\varphi,p,\chi)}=T(\mu\circ pr_2)(\tilde\varphi,p,\chi)\xi_{(\tilde\varphi,p,\chi)},\ee
	where $pr_1(\tilde\varphi,p,\chi):=\tilde\varphi$ and $pr_2(\tilde\varphi,p,\chi):=p$. The equality (\ref{89}) follows from $\tilde\pi^*\circ pr_1=\mu\circ pr_2$. 	For $A=(\id_{TP}+B) \sigma_\alpha(\tilde A)$ we have
	\be\label{Icos} \langle (I_\alpha^{-1})^*\Theta(A)(\tilde\varphi,p,\chi),\xi_{(\tilde\varphi,p,\chi)}\rangle=\ee
	$$=\langle I_\alpha^{-1}(\tilde\varphi,p,\chi),A(\pi^*(I_\alpha^{-1}(\tilde\varphi,p,\chi)))\circ T\pi^*(I_\alpha^{-1}(\tilde\varphi,p,\chi))\circ TI_\alpha^{-1}(\tilde\varphi,p,\chi)\xi_{(\tilde\varphi,p,\chi)}\rangle=$$
	$$=\langle I_\alpha^{-1}(\tilde\varphi,p,\chi),A(\pi^*\circ I_\alpha^{-1})(\tilde\varphi,p,\chi)\circ T(\pi^*\circ I_\alpha^{-1})(\tilde\varphi,p,\chi)\xi_{(\tilde\varphi,p,\chi)}\rangle=$$
	$$=\langle \tilde\varphi\circ T\mu(p)+\chi\circ\alpha_p,A(p)Tpr_2(\tilde\varphi,p,\chi)\xi_{(\tilde\varphi,p,\chi)}\rangle=$$
		$$=\langle \tilde\varphi\circ \tilde A(\mu(p)),T\mu(p)\circ Tpr_2(\tilde\varphi,p,\chi)\xi_{(\tilde\varphi,p,\chi)}\rangle+
		\langle \chi\circ\alpha_p,A(p)Tpr_2(\tilde\varphi,p,\chi)\xi_{(\tilde\varphi,p,\chi)}\rangle=$$
		$$=\langle \tilde\varphi\circ \tilde A(\mu(p)),T(\tilde\pi^*\circ pr_1)(\tilde\varphi,p,\chi)\xi_{(\tilde\varphi,p,\chi)}\rangle+
		\langle \chi\circ\alpha_p,A(p)\circ Tpr_2(\tilde\varphi,p,\chi)\xi_{(\tilde\varphi,p,\chi)}\rangle,$$
		where we have used (\ref{Ialphainv}), (\ref{89}) and $\pi^*\circ I_\alpha^{-1}=pr_2$. Omitting $\xi_{(\tilde\varphi,p,\chi)}$  in (\ref{Icos}) we obtain
		\be\label{86}  (I_\alpha^{-1})^*\Theta(A)(\tilde\varphi,p,\chi)=\ee
		$$=\tilde\varphi\circ \tilde A(\mu(p))\circ T(\tilde\pi^*\circ pr_1)(\tilde\varphi,p,\chi)+\chi\circ\alpha_p\circ A(p)\circ Tpr_2(\tilde\varphi,p,\chi)=$$
		$$=pr_1^*(\tilde\Theta(\tilde A)(\tilde\varphi,p,\chi)+\langle pr_3(\tilde\varphi,p,\chi),pr_2^*(\Phi_{A^{-1}}(\alpha))(\tilde\varphi,p,\chi)\rangle,$$
		where $pr_3(\tilde\varphi,p,\chi):=\chi$.		The  symplectic form corresponding to  (\ref{86}) is given by 
		\be\label{sympl} d((I_\alpha^{-1})^*\Theta(A))= \ee
		$$=pr_1^*(d\tilde\Theta(\tilde A))+\langle d\ pr_3\ \stackrel{\wedge}{,}\  pr_2^*(\Phi_{A^{-1}}(\alpha))\rangle+\langle  pr_3, pr_2^*(d\Phi_{A^{-1}}(\alpha))\rangle.$$
		Let us note that $(I_\alpha^{-1})^*\Theta(A)$ consists of the pull back on $\ol P\times T^*_eG$ of the generalized canonical  form $\tilde \Theta(\tilde A)\in CanT^*M$ by $pr_1:\ol P\times T^*_eG\to T^*M$ and the part defined by the connection form  $\Phi_{A^{-1}}(\alpha)$.
		
		\bigskip
		
		\section{The Marsden-Weinstein reduction}\label{sec5}
		Considering $P$ as the configuration space of a physical system which has a symmetry described by $G$ one consequently assumes that its Hamiltonian $H\in C^\infty(T^*P)$ is a $G$-invariant function on $T^*P$, i.e. $H\circ \phi^*_g=H$ for $g\in G$. 
			Hence  it is   natural  to consider the class of Hamiltonian  systems on $G$-symplectic manifold $(T^*P, \omega_{A },J_0)$ with a $G$-invariant Hamiltonians $H$. 
			
			Using the isomorphism  $(T^*P, \omega_{A },J_0)\cong (\ol P\times T^*_eG, (I_\alpha^{-1})^*\omega_{A }, pr_3)$ of  $G$-symplectic manifolds, where the symplectic form $(I_\alpha^{-1})^*\omega_{A }$ is presented in  (\ref{sympl}) and the momentum map is $J_0\circ I_\alpha=pr_3$, one defines (see \cite{Mont, Sternberg}) the $G$-invariant Hamiltonian $H\in C^\infty(\ol P\times T^*_eG)$ as follows
		$$ H(\tilde\varphi,p,\chi):=(\tilde H\circ \ol\mu)(\tilde\varphi,p,\chi)+(C\circ pr_3)(\tilde\varphi,p,\chi),$$
where $\ol\mu:\ol P\to T^*M$ is the projection of the total space $\ol P$ of the principal $G$-bundle $\ol P(T^*M,G)$ on the base $T^*M$ and $\tilde H\in C^\infty(T^*M)$. Coming back to the phase space 	$(T^*P, \omega_{A },J_0)$	one obtains the  $G$-Hamiltonian system  with the Hamiltonian
$$		 H_{\alpha}(\varphi):=(H\circ I_\alpha)(\varphi)=(\tilde H\circ \Gamma^*_\alpha)(\varphi)+ (C\circ J_0)(\varphi).$$
Let us stress that in the case of $(T^*P,\omega_{A },J_0, H_\alpha)$ only the Hamiltonian $H_\alpha$ of the system  depends on $\alpha\in Conn P(M,G)$ and in the case of $(\ol P\times T^*_eG, (I_\alpha^{-1})^*\omega_{A}, pr_3, H)$  the only symplectic form  $(I_\alpha^{-1})^*\omega_{A}$.

In \cite{Kerner,Kummer, Mont, Sn,Sn2,Sou,Sternberg,Weinstein:1977,Wong} there were presented various models of the description of motion of a classical particle in the external Yang-Mills field given by the connection $\alpha$ and by the Hamiltonian $H$. In these models the basic symplectic structure on $T^*P$ is given by the standard symplectic form $\omega_0$.
 Here allowing the generalized symplectic form $\omega_{A }$ we extend the class of models under investigations. 

The $G$-invariance of the Hamiltonian system $(T^*P,\omega_{A },J_0,H_\alpha)$ allows ones to apply the Marsden-Weinstein reduction procedure \cite{MW}. For this reason we consider		
	 the dual  pair of Poisson manifolds 
 \unitlength=5mm
 \be\label{diagsymplpair}
 \begin{picture}(11,4.6)
    \put(4.5,4){\makebox(0,0){$(T^*P,\omega_{A })$}}
    \put(0,-1){\makebox(0,0){$(T^*P/G,\{\cdot,\cdot\}_{A /_G})$}}
    \put(9,-1){\makebox(0,0){$ (T^*_eG,\{\cdot,\cdot\}_{L-P})$}}
    \put(4,3.7){\vector(-1,-1){4}}
     \put(5,3.7){\vector(1,-1){4}}
       \put(1,2){\makebox(0,0){$\pi^*_G$}}
    \put(8,2){\makebox(0,0){$ J_0$}}
     \end{picture}\ee		
		\bigskip
		\ \\
		in the sense of Subsection 9.3 in \cite{coste}.
		Recall that  the symplectic form $\omega_{A }$ is a $G$-invariant two-form. The Poisson bracket $\{f,g\}_{A /G}$ of $f,g\in C^\infty(T^*P/G)$ is defined by $\{f\circ \pi^*_G,g\circ  \pi^*_G\}_A$, where we identify $C^\infty(T^*P/G)$ with the Poisson subalgebra $C^\infty_G(T^*P)\subset C^\infty(T^*P)$ of $G$-invariant functions and $\{\cdot,\cdot\}_{A }$ is the Poisson bracket on $C^\infty(T^*P)$ defined by $\omega_{A }$. By $\{\cdot,\cdot\}_{L-P}$ we denoted Lie-Poisson bracket on the dual $T^*_eG$ of the Lie algebra $T_eG$.
		
		Note that surjective submersions in (\ref{diagsymplpair}) are Poisson maps and the Poisson subalgebras $(\pi^*_G)^*(C^\infty(T^*P/G))$ and $J_0^*(C^\infty(T^*_eG))$ are mutually polar. As a consequence of the above one obtains the one-to-one correspondence between the coadjoint orbits $\mathcal{O}\subset T^*_eG$ of $G$ and the symplectic leaves $\mathcal{S}\subset T^*P/G$ of the Poisson manifold $(T^*P/G,\{\cdot,\cdot\}_{A /G})$ which is defined as follows 
	$$ \mathcal{S}=\pi^*_G(J_0^{-1}(\mathcal{O})) \quad {\rm and }\quad \mathcal{O}=J_0({\pi^*_G}^{-1}(\mathcal{S})).$$
		
		Let us stress that the manifold structure of a  symplectic leaf $\mathcal{S}$   does not depend on the choise of $A \in Aut_{TG}TP$, but its symplectic structure $\omega_{A }^{\mathcal{S}}$ does. The action of $Aut_{TG}TP$ on $T^*P$ commutes with the action (\ref{62}) of $G$ on $T^*P$, so, it defines an action of $Aut_{TG}TP$ on the quotient manifold $T^*P/G$. By the definition of $Can_{TG}T^*P$, the momentum map $J_{A }$ for $\omega_{A }$ coincides with $J_0=J_A$. Thus we conclude that the action of $A' \in Aut_{TG}TP$ on $T^*P/G$ preserves the symplectic leaves $\mathcal{S}$  and  transforms their  symplectic forms in the following way $\omega_{A }^{\mathcal{S}}\to \omega_{A'A }^{\mathcal{S}}$.

		Since   $I_\alpha:T^*P\to \ol P\times T^*_eG$ is a  $G$-equivariant map it defines a diffeomorphism 
		$$\label{Iquo} [I_\alpha]: T^*P/G\to \ol P\times _{Ad^*_G}T^*_eG$$
		of the quotient manifolds which transports the Poisson structure $\{\cdot,\cdot\}_{A/G}$ of $T^*P/G$  on the total space $ \ol P\times _{Ad^*_G}T^*_eG$ of the vector bundle  $\ol P\times _{Ad^*_G}T^*_eG\to T^*M$ over the symplectic manifold $(T^*M, d\tilde\Theta(\tilde A))$. Using (\ref{Iquo}) one obtains the isomorphisms
		$[I_{\alpha,\mathcal{O}}]=\pi^*_G(J^{-1}_0(\mathcal{O}))\stackrel{\sim}{\to} \ol P\times _{Ad^*_G}\mathcal{O}$ of symplectic leaves. If $A=\sigma_\alpha(\tilde\id_{TM})=\id_{TP}$ one obtains the diffeomorphisms of symplectic leaves constructed in \cite{Mont, Sternberg} where the coadjoint orbit $\mathcal{O}$ is the phase space for inner degrees of freedom. In this case  the symplectic manifold $(T^*M, d\tilde\gamma_0)$ is the phase space for external degrees of freedom and $\ol P\times _{Ad^*_G}\mathcal{O}$ is the total phase space of a classical particle interacted with Yang-Mills field described by $\alpha$ which was constructed in \cite{Sternberg}.

		If $\rho\in T^*_eG$ is such that $Ad^*_G\rho=\rho$ then $\mathcal{O}=\{\rho\}$. Hence the symplectic leaf $\S=\ol P\times Ad^*_G\mathcal{O}$ is isomorphic as a manifold with $T^*M$ but the reduced symplectic form $\omega ^S_{A }$ of $\S$ depends on the choice of $A \in Aut_{TG}TP$. For example the above situation happens for all $\rho\in T^*_eG$ if $G$ is a commutative group or if $\rho=0$.
		
		Ending let us mention that all constructions presented above have an equivariance properties with respect to the group $Aut_{TG}TP$.


\begin{thebibliography}{10}

\bibitem{coste}
Coste, A., Dazord, P., Weinstein, A.: Groupo\"{\i}des symplectiques, Publications du D\'{e}partement de Math\'{e}matiques de l'Universit\'{e} Lyon, Dept. Math. Univ. Claude-Bernard Lyon I, 1--67 (1987) 


	\bibitem{Kerner}
B. Kerner..
\newblock {\em Generalization of the Kaluza-Klein theory for an arbitrary non-abelian gauge groupoid},  \newblock Ann.Inst.Henri Poincare. 9. 143-152 (1968)
	
	\bibitem{Koba}
S. Kobayashi, K. Nomizu.
\newblock {\em Foundations of Differential Geometry}, volume
  1 \newblock Interscience Publishers, 1963.
	
		\bibitem{Kummer}
M.Kummer.
\newblock {\em On the construction of the reduced phase space of a Hamiltonian system with symmetry},  \newblock Indiana University Math, 30, 281-291 (1981)
	



\bibitem{Mackenzie:GT}
K.~C.~H. Mackenzie.
\newblock {\em General theory of {L}ie groupoids and {L}ie algebroids}, volume
  213 of {\em London Mathematical Society Lecture Note Series}.
\newblock Cambridge University Press, Cambridge, 2005.


\bibitem{MW}
J.E.Marsden,  A.Weinstein.
\newblock {\em Reduction of symplectic manifold with symmetry}, Rep. Math. Phys. 5 (1974) no.1 121-130.



\bibitem{Mont}
R. Montgomery.
\newblock {\em Canonical formulations of a classical particle in a Yang-Mills field and Wong's equations}.\newblock Lett. Math.Phys. 8 (1984) 59-67.


\bibitem{Sn}
J.\'{S}niatycki 
\newblock Geometric Quantization and Quantum Mechanics
\newblock {\em Springer-Verlag}, 1977.

\bibitem{Sn2}
J.\'{S}niatycki 
\newblock On Hamiltonian dynamics of particles with gauge degrees of freedom
\newblock {\em J.Hadronic}, 2, 642-656 (1979).


\bibitem{Sou}
J.M.Souriau 
\newblock Structure des Systemes Dynamiques
\newblock {\em Dunod, Paris}, 1970.



\bibitem{Sternberg}
S.Sternberg 
\newblock Minimal coupling and the symplectic mechanics of a classical particle in the presence of a Yang-Mills field.
\newblock {\em Proc.Natl.Acad.Sci.USA.},74(12):5253-5254, 1977.


\bibitem{Weinstein:1977}
A.~Weinstein.
\newblock A universal phase space for particles in {Y}ang-{M}ills fields.
\newblock {\em Lett. Math. Phys.}, 2(5):417--420, 1977/78.

\bibitem{Wong}
S.K.Wong.
\newblock Field and Particle Equations for the Classical Yang-Mills Field
and Particles with Isotopic Spin.
\newblock {\em Nuovo Cimento}, 65A: 689-694 (1970).

\end{thebibliography}
\end{document}